\newcommand{\RR}{\mathbb{R}}
\DeclareMathOperator{\tr}{tr}
\newcommand{\norm}[1]{\left\Vert#1\right\Vert}
\let\on=\operatorname
\newcommand{\Imm}{\on{Imm}}
\newcommand{\Diff}{\on{Diff}}
\newcommand{\degree}{\operatorname{deg}}
\newcommand{\degbar}{\overline{\operatorname{deg}}}
\newtheorem{thm}{Theorem}
\newtheorem{rem}{Remark}
\newtheorem{prop}{Proposition}
\begin{document} \sloppy

\title{Shape Analysis of Surfaces Using General Elastic Metrics%
\thanks{M. Bauer was partially supported by NSF-grant 1912037 (collaborative research in connection with NSF-grant 1912030). S. C. Preston was partially supported by Simons Foundation Collaboration Grant for Mathematicians no. 318969. E. Klassen was partially supported by Simons Foundation Collaboration Grant for Mathematicians no. 317865.}
}

\author{Zhe Su \and Martin Bauer  \and Stephen C. Preston \and Hamid Laga  \and Eric Klassen %
}

\institute{
           Z. Su \at
           Department of Mathematics, Florida State University, Tallahassee, USA. \\
           \email{zsu@math.fsu.edu}
          \and
M. Bauer \at
              Department of Mathematics, Florida State University, Tallahassee, USA. \\
              \email{bauer@math.fsu.edu}           %
           \and
           S. Preston \at
           Department of Mathematics, Brooklyn College and CUNY Graduate Center, New York, USA. \\
           \email{stephen.preston@brooklyn.cuny.edu}
                       \and
           H. Laga  \at
          Information Technology, Mathematics and Statistics, Murdoch University, Australia and with the Phenomics and Bioinformatics Research Centre, University of South Australia, Adelaide, Australia.\\
           \email{H.Laga@murdoch.edu.au}
           \and
           E. Klassen \at
           Department of Mathematics, Florida State University, Tallahassee, USA. \\
           \email{klassen@math.fsu.edu}
}

\date{Received: date / Accepted: date}

\maketitle

\begin{abstract}
In this article we introduce a family of elastic metrics on the space of parametrized surfaces in 3D space using a corresponding family of metrics on the space of vector valued one-forms. We provide a numerical framework for the computation of geodesics with respect to these metrics.
The family of metrics is invariant under rigid motions and reparametrizations; hence it induces a metric on the ``shape space" of surfaces. This new class of metrics
generalizes a previously studied family of elastic metrics and includes in particular the Square Root Normal Function (SRNF) metric, which has been proven successful in various applications. We demonstrate our framework by showing several examples of geodesics  and compare our results with earlier results obtained from the SRNF framework.
\keywords{Shape spaces \and Vector valued one-forms \and Elastic metrics \and SRNF metric \and Surface registration}
\end{abstract}

\section{Introduction}
Shape analysis of surfaces in $\mathbb R^3$ has been motivated by many applications in bioinformatics, computer graphics and medical imaging, see e.g., \cite{kilian2007geometric,kurte2011Anatomical,brett2003Human,grenandery1998Anatomy,heeren2012time,tumpach2016gauge}. In most applications the actual parametrization of the surfaces under consideration is unknown and one is only able to observe the  ``shape'' of the object, i.e., a priori the point correspondences between the surfaces are unknown and should be an output of the  performed analysis. Furthermore, we will often identify surfaces that only differ by a rigid motion.
Thus, we define the shape space of surfaces as the quotient space of all parametrized surfaces  modulo  the group of reparametrizations and/or the group of rigid motions. One goal in shape analysis is to quantify the differences and find the optimal deformations between the given objects; see Figure~\ref{fig.geodesic.introduction} for two examples of  optimal deformations between distinct surfaces.

\begin{figure*}[ht]
	\centering
	\includegraphics[scale=0.45]{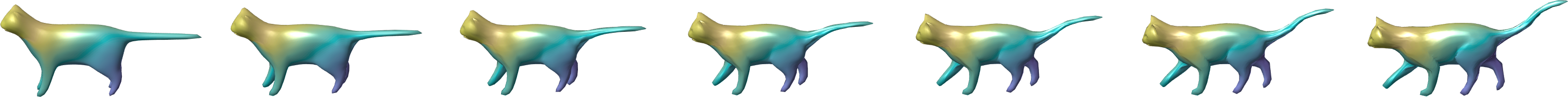}
	\vskip.20in
	\includegraphics[scale=0.45]{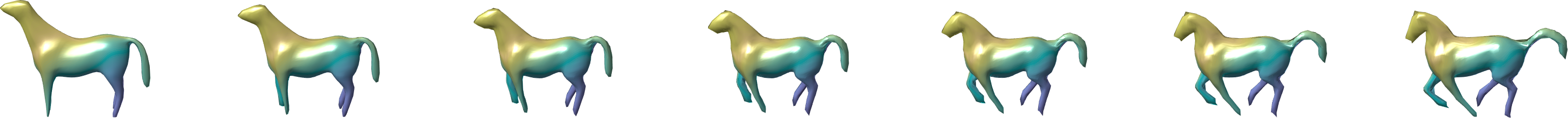}
	\caption{Geodesics between shapes in the space of unparametrized surfaces $\Imm(S^2, \RR^3)/\Diff_+(S^2)$ with respect to the split metric \eqref{eq.metric_abcd} for a choice of coefficients $(1,1,0.1,0)$.}
	\label{fig.geodesic.introduction}	
\end{figure*}

The main challenge in the context of shape analysis of surfaces consists in the registration problem, i.e., finding the (optimal) point correspondences between distinct surfaces, which can then be used as the basis for the resulting statistical analysis. In previous work, the correspondence problem has often been solved in a preprocessing step, which is then followed by an independent statistical analysis of the resulting parametrized surfaces. This approach can yield several undesirable consequences on the statistical analysis, see e.g.,~\cite{srivastava2016functional}.

The goal of elastic shape analysis is to formulate this problem in a unified framework: using a \emph{reparametrization invariant} metric on the space of all parametrized surfaces  one then studies the induced Riemannian metric on the quotient space. Using this approach, the point correspondences and the resulting statistical analysis can be performed in a consistent way.

In the past years several metrics and frameworks have been proposed as potential approaches to this goal, see e.g., \cite{jermyn2012SRNF,laga2017numerical,kurtek2013landmark,bauer2014overview,srivastava2016functional,tumpach2016gauge}.
In particular, a class of elastic metrics has been proposed in \cite{jermyn2017}, which is defined as a weighted sum of three components that measure the differences in shearing, stretching and bending of the surface. This family of metrics is actually a subfamily of the general class of reparameterization invariant Sobolev metrics, as studied in \cite{bauer2011sobolev,bauer2012sobolev,bauer2014overview}. It is also a natural generalization of the family of elastic $G^{a,b}$-metrics on the space of curves \cite{mio2007Elastic}, which has been proven efficient and successful in numerous  shape analysis applications \cite{srivastava2011Shape,younes1998computable,younes2008metric,srivastava2016functional,SuKuKlSr2014,ZhSuKlLeSr2015,CeEsSch2015,zhe2018Homogeneous}.

To obtain a numerically efficient representation,  Srivastava et al.  \cite{srivastava2011Shape} introduced the so-called Square Root Velocity Function (SRVF) for comparing curves. In this framework, the space of curves endowed with the elastic metric for a particular choice of coefficients is isometric to an $L^2$-space, which makes the computation of geodesics extremely easy and efficient. Motivated by this progress, Jermyn et al. \cite{jermyn2012SRNF} introduced the Square Root Normal Function (SRNF) representation for elastic shape analysis of surfaces and  showed that the $L^2$-metric on the space of SRNFs corresponds to one member of a more general class of elastic metrics on the space of surfaces. While it is computationally efficient, there are several drawbacks 
to this approach: the SRNF metric only consists of the last two terms of the general elastic metric for surfaces and is thus highly degenerate; i.e., there exists a high-dimensional space of deformations that has no cost in this framework\footnote{See the article \cite{klassenmichor2019} for an example of a path of closed surfaces that connects two distinct shapes, such that the whole path has the same SRNF.}. Furthermore, the SRNF map is neither injective nor surjective, and  its  image is not fully understood. In consequence there exists no analytic formula for geodesics in the image space and 
geodesics are usually approximated by numerically  inverting the straight line between the given SRNFs, where each inversion is calculated as the solution to an optimization problem~\cite{laga2017numerical}.

\noindent{\bf Contributions of this article:} The purpose of the present article is to introduce a numerical framework for the computation of the geodesic initial and boundary value problem with respect to a family of metrics that contains the general elastic metric as a special case. The framework complements \cite{bauer2018OneForms} which defined, using vector valued one-forms, a metric on the space of surfaces that is invariant under rigid motions and reparametrizations. It does not require a numerical inversion of the SRNF-map and thus overcomes some of the aforementioned difficulties. Furthermore, this framework will allow us in the future to choose the constants of the metric in a data driven way, which has potential importance in many applications.  See~\cite{kurtek2018simplifying,bauer2018relaxed} for related considerations regarding the choice of constants for the elastic metric on the space of curves.

\noindent{\bf Acknowledgements:} The authors thank Anuj Srivastava and all the members of the Florida State statistical shape analysis group for helpful discussions during the preparation of this manuscript. In addition we are grateful to Sebastian Kurtek and Barbara Tumpach for discussion about the implementation of the minimization over the diffeomorphism group. 

\section{Mathematical Framework and Background}
In this section we will give the formal definition of the space of shapes and describe the general elastic metric. Then we will introduce a new representation for the elastic metric using vector valued one-forms, which will still allow us to obtain an efficient discretization of the geodesic boundary value problem.

From here on we will model a surface as an immersion $f$ from a model space $M$ into $\RR^3$, i.e., a smooth map from $M$ to $\RR^3$ that has an injective tangent mapping. Here $M$ is a two-dimensional compact manifold encoding the topology of the objects under consideration. Typically, choices of $M$ include the two-sphere $M=S^2$ or the sheet $M=[0,1]^2$.

Denote by $\Imm(M, \RR^3)$ the space of all immersions.
To define the space of shapes, we now consider the actions of the group of rigid motions and the group of diffeomorphisms on $\Imm(M, \RR^3)$. The group of rigid motions is given by the semidirect product of the group of rotations and the group of translations, i.e., $\on{SO}(3)\ltimes\RR^3$, where $\on{SO}(3)$ is the set of all rotation matrices. It acts on $\Imm(M, \RR^3)$ as follows:
\begin{align}
\left(\on{SO}(3)\ltimes\RR^3\right)\times \on{Imm}(M, \RR^3)&\to \on{Imm}(M, \RR^3)\\
\left((R,v), f\right)&\mapsto Rf+v.
\end{align}
Denote by $\Diff_+(M)$ the group of diffeomorphisms that preserve the orientation of $M$. The action of $\on{Diff}_+(M)$ on $\on{Imm}(M, \RR^3)$ is given by composition from the right:
\begin{align}
\on{Imm}(M, \RR^3)\times\Diff_+(M)&\to\on{Imm}(M, \RR^3)\\
(f, \gamma)&\mapsto f\circ\gamma.
\end{align}
We say that two immersions $f_1$ and $f_2$ have the {\it same shape} if they are in the same orbit of the action of $\Diff_+(M)$, or both actions depending on whether we want to mod out rigid motions. The space of shapes can then be defined as the quotient space:
\begin{align}
\mathcal{S}(M,\RR^3) = \on{Imm}(M, \RR^3)/\mathcal{G},
\end{align}
where $\mathcal{G} = \Diff_+(M)$ or $\mathcal{G} = \on{Diff}_+(M)\times\on({SO}(3)\ltimes\RR^3) $.

This quotient space has some mild singularities and does not carry the structure of a smooth manifold but only of an infinite dimensional orbifold \cite{cervera1991action}.
However, for the purpose of this article we can ignore these subtleties and assume that we are always working away from the singularities, which allows us to treat $\mathcal{S}(M,\RR^3)$ as an infinite dimensional manifold.

By endowing the space of immersions $\Imm(M,\RR^3)$ with a Riemannian metric that is invariant under the actions of $\on{SO}(3)\ltimes\RR^3$ and $\Diff_+(M)$, the space of shapes $\mathcal{S}(M, \RR^3)$ becomes a Riemannian manifold (orbifold), where the metric is induced by the Riemannian metric on $\Imm(M, \RR^3)$.

In the following we will denote by $\on{dist}_{\Imm}$ the geodesic distance function of a Riemannian metric on the space of immersions $\Imm(M, \RR^3)$ and by $[f]$ the equivalence class of $f$ under the action of $\mathcal{G}$. Given two surfaces $f_1$ and $f_2$, we can define the distance between $[f_1]$ and $[f_2]$ as the infimum of the distance between the orbits of $f_1$ and $f_2$ under the action of $\mathcal{G}$. For example, the distance function on the space of unparametrized surfaces $\mathcal{S} = \Imm(M, \RR^3)/\Diff_+(M)$ can be defined as follows:
\begin{align}
\on{dist}_{\mathcal{S}}([f_1], [f_2]) = \inf_{\gamma\in\Diff_+(M)}\on{dist}_{\Imm}(f_1\circ\gamma, f_2).
\end{align}
We will use this induced  distance  as our measure for comparing unparametrized surfaces. Given two parametrized surfaces, to measure the similarity between them we will need to find the optimal reparametrization in $\Diff_+(M)$ that realizes the infimum. If we also want to mod out rigid motions and find the distance between two elements in the space of unparametrized surfaces modulo rigid motions $\Imm(M, \RR^3)/\left(\Diff_+(M)\times\on{SO}(3)\ltimes\RR^3\right)$, we will need to solve a joint optimization problem of finding the best reparametrization, rotation and translation.

\subsection{The General Elastic Metric and the SRNF Framework}\label{SRNF-map}
Jermyn et al. introduced in \cite{jermyn2012SRNF} the general elastic metric which has the desired invariance properties under shape-preserving deformations. To define this metric we first introduce a transformation that maps an immersion onto its induced surface metric and normal vector field:
\begin{align}
\on{Imm}(M,\mathbb R^3)&\mapsto \operatorname{Met}(M)\times C^{\infty}(M,\mathbb R^3) \\
f&\rightarrow \left(g:=g^f,n:=n^f\right)\;,
\end{align}
where $n^f$ is the unit normal vector field to the surface $f$, which is given in local coordinates by $$n=\frac{f_x\times f_y}{|f_x\times f_y|}$$ and where the surface metric is given by $$g=f^*\langle.,.\rangle_{\mathbb R^3}=\langle Tf.,Tf. \rangle_{\mathbb R^3}.$$
It is classical result in Riemannian geometry that any surface can be reconstructed uniquely by these two quantities \cite{kinetsu1975Gauss}. Thus, this representation allows one to define a Riemannian metric on the space of immersions by describing it on the image $\operatorname{Met}(M)\times C^{\infty}(M,\mathbb R^3)$. The general elastic metric as introduced in \cite{jermyn2012SRNF} is defined by:
\begin{multline}\label{eq.elasticMetric}
G_{g,n}((\delta g,\delta n),(\delta g, \delta n)) \\=  a \int_M \operatorname{tr} (g^{-1} \delta g g^{-1}\delta g) \mu_g +
b \int_M \operatorname{tr} (g^{-1} \delta g)^2  \mu_g\\ + c \int_M \langle \delta n, \delta n \rangle_{\mathbb R^3} \mu_g\;
\end{multline}
where $a,b,c\geq 0$ are constants and where $\mu_g$ denotes the induced volume density of the surface $f$.

Each of the three terms appearing in the metric \eqref{eq.elasticMetric} has a natural geometric interpretation: the first term penalizes local change in the metric (shearing),  the second term measures the change in the volume density (scaling) and the third term quantifies the change of the normal vector (bending).

Instead of using the $(g,n)$ representation for comparing surfaces, in the same paper \cite{jermyn2012SRNF} Jermyn et al. introduced the SRNF framework, where a surface is represented only as a rescaled normal vector field:
\begin{align*}
\mathcal{Q}: \on{Imm}(M, \RR^3)&\to C^{\infty}(M, \RR^3)\\
f(s)&\mapsto \sqrt{A(s)}n(s).
\end{align*}
where $A(s)$ denotes the local area-multiplication factor, which is given in local coordinates by $A(s)=|f_x(s)\times f_y(s)|$.
After equipping the target space $C^{\infty}(M, \RR^3)$ with the flat $L^2$ metric the map $Q$ becomes an infinitesimal isometry, where the space $\Imm(M, \RR^3)$ is equipped with the elastic metric $G^{a,b,c}$ with $a = 0, b = \frac{1}{16}$ and $c = 1$, i.e., the pullback of the $L^2$ metric on $C^{\infty}(M, \RR^3)$ along the map $Q$ is equal to the metric $G^{0,\frac{1}{16},1}$.
Note however that the resulting metric is degenerate for this choice of constants, i.e., there might exist deformation fields that have no cost with respect to the metric.  Furthermore, given $q\in C^{\infty}(M, \RR^3)$ there may be either no preimage $Q^{-1}(q) \in \Imm(M, \RR^3)$  of $q$ or many preimages. Most importantly the image of the space of immersions under the SRNF map cannot be easily characterized and, so far, it is not well understood.

Although the distance between two surfaces, which is given by the $L^2$ difference between their SRNFs, can be easily calculated, finding the inversion of the linear path between their SRNFs that realizes this distance is not possible as the linear path will usually leave the image of the SRNF-representation. In \cite{laga2017numerical} Laga et al. introduced a way to approximate the inversion of arbitrary paths between SRNFs by formulating inversion as an optimization problem.  In practice, this has been used to approximate geodesics, by numerically inverting straight lines between the SRNFs.
However, since the image of the SRNF-map is not convex in $L^2$ this method will not yield geodesics with respect to the SRNF metric, see Table~\ref{table.comparison}.
\subsection{Immersions and Vector Valued One-Forms}
In the following we will introduce our framework for comparing surfaces. The metric defined on the space of immersions can be seen as an alternative representation for the general elastic metric.
Therefore we consider the differential as a mapping
\begin{align}\label{differential}
d:\on{Imm}(M,\RR^3)/{\on{trans}}&\to\Omega_+^1(M,\RR^3)\\
f&\mapsto df\;,
\end{align}
where $\Omega_+^1(M,\RR^3)$ denotes the space of $\RR^3$-valued full-ranked one-forms on $M$. Given a metric $g$ on $M$, in a local chart with a field of orthonormal bases, an element of $\Omega_+^1(M,\RR^3)$ can be represented as a field of full-ranked $3\times 2$ matrices. The differential $d$ as defined above is  injective, but not surjective. Furthermore, in contrast to the SRNF mapping $\mathcal{Q}$ mentioned in Section~\ref{SRNF-map}, it is easy to characterize the image of the differential $d$. The following theorem contains this characterization and a result concerning the manifold structure space of full-ranked one forms $\Omega_+^1(M, \RR^3)$:
\begin{thm}
	The space of smooth full-ranked one-forms $\Omega_+^1(M, \RR^3)$ is an open subset of an infinite dimensional vector space $\Omega^1(M, \RR^3)$ and  thus it is an infinite dimensional Frechet manifold, where the tangent space at each point is simply $\Omega^1(M, \RR^3)$.
	
	Furthermore, the image of the differential $d$ is the space of all exact full-ranked one-forms, which is the intersection of $\Omega^1_
	+(M, \RR^3)$ with a linear subspace of $\Omega^1
	(M, \RR^3)$.
\end{thm}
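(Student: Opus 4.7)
The plan has two largely independent parts: (i) the manifold structure of $\Omega^1_+(M,\RR^3)$, and (ii) the characterization of the image of $d$.

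For part (i), I would first recall that $\Omega^1(M,\RR^3) = \Gamma^\infty(T^*M \otimes \RR^3)$, i.e.\ the space of smooth sections of a finite-rank vector bundle over the compact manifold $M$; equipped with the usual family of $C^k$-seminorms it is a Fréchet space. I would then argue that $\Omega^1_+(M,\RR^3)$ is an open subset of this Fréchet space by showing that full-rankedness is an open condition in a suitable topology. Concretely, in any local chart $(U,\varphi)$ with an orthonormal frame, an element $\omega\in\Omega^1(M,\RR^3)$ is represented by a smooth $3\times 2$ matrix valued function, and the rank-$2$ locus is the complement of the common zero set of its three $2\times 2$ minors. Defining the continuous function
\begin{equation}
\Phi : \Omega^1(M,\RR^3)\to \RR, \qquad \Phi(\omega) = \min_{p\in M}\, \sigma_{\min}\!\left(\omega(p)\right)^2,
\end{equation}
where $\sigma_{\min}$ is the smallest singular value of the $3\times 2$ matrix representation (a continuous, chart-invariant quantity, attained since $M$ is compact), I obtain $\Omega^1_+(M,\RR^3) = \Phi^{-1}((0,\infty))$ as the preimage of an open set under a continuous map. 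Openness in a Fréchet space then immediately gives the Fréchet manifold structure, with tangent space at every point equal to the ambient vector space $\Omega^1(M,\RR^3)$, as this is a general fact for open subsets of topological vector spaces.

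For part (ii), the forward inclusion is essentially by definition: if $f\in\Imm(M,\RR^3)$, then $df$ is exact, and the immersion condition on $f$ is precisely that $T_pf:T_pM\to\RR^3$ is injective for every $p$, which translates to $df(p)$ having rank $2$, i.e.\ $df\in\Omega^1_+(M,\RR^3)$. For the reverse inclusion I would take an exact full-ranked one-form $\omega\in\Omega^1_+(M,\RR^3)$, fix a basepoint $p_0\in M$, and define a candidate primitive componentwise by path-integration,
\begin{equation}
\tilde f(p) = \int_{p_0}^{p}\omega,
\end{equation}
which is well defined because $\omega$ is exact. Then $d\tilde f = \omega$ has full rank everywhere, so $\tilde f$ is an immersion; and any two primitives differ by a constant vector in $\RR^3$, so $\tilde f$ descends to a well-defined element of $\Imm(M,\RR^3)/\operatorname{trans}$ that maps to $\omega$ under $d$. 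The fact that the exact forms are a linear subspace of $\Omega^1(M,\RR^3)$ is immediate from linearity of the exterior derivative, so the image is the intersection of that subspace with the open set $\Omega^1_+(M,\RR^3)$, as claimed.

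The main technical point I expect to have to be careful about is the openness argument in the Fréchet topology: asserting that the minimum of singular values over $M$ is a continuous function on $\Omega^1(M,\RR^3)$ requires using that the $C^0$-seminorm is among those defining the Fréchet topology together with compactness of $M$, and then checking chart-independence so that the global statement makes sense. Everything else — manifold structure on an open subset of a Fréchet space, exactness of $df$, existence of a primitive for an exact one-form, and linearity of the space of exact forms — is routine and I would merely sketch those steps.
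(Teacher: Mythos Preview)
Your proposal is correct and follows essentially the same approach the paper has in mind, only spelled out in far greater detail: the paper's own proof is the single sentence ``The proof of this result follows directly from the definition of these spaces,'' whereas you supply the standard Fr\'echet-space structure on sections, the openness argument via the minimum singular value, and the primitive-by-integration construction. There is nothing to correct; if anything, your version is more rigorous than what the paper provides.
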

\begin{proof}
The proof of this result follows directly from the definition of these spaces.
\end{proof}
This theorem allows us to define a Riemannian metric on these spaces as follows.  Let $\alpha\in\Omega_+^1(M, \RR^3)$ and $\xi\in T_{\alpha}\Omega_+^1(M, \RR^3)$. For the volume form $\mu$ on $M$ induced by the metric $g$ we let
\begin{align}\label{eq.metric}
G_{\alpha}(\xi,\xi) = \int_M\operatorname{tr}\left(\xi_x(\alpha_x^T\alpha_x)^{-1}\xi_x^T\right)\sqrt{\det(\alpha_x^T\alpha_x)}\mu.
\end{align}
This metric does not depend on the choice of orthonormal bases we choose and is actually independent of the metric $g$ on $M$, see \cite{bauer2018OneForms} for more details. Thus we can choose an easily obtainable metric $g$ on $M$ and then calculate this metric on $\Omega_+^1(M, \RR^3)$.

Using the injection \eqref{differential}, we  obtain a pullback metric on the space $\Imm(M, \RR^3)$ modulo translations and it turns out that this metric is related to the full elastic metric.
The space of immersions equipped with this inner product is an infinite dimensional Riemannian manifold. Unfortunately, there exists no explicit formula to calculate minimizing geodesics between two given immersions $f_0$ and $f_1$. Instead we will rely on numerical methods to minimize the path length over all paths of immersions connecting the given immersions $f_0$ and $f_1$. Alternatively these minimizing deformations can be found by solving the Lagrangian optimality condition for the energy functional, called the geodesic equation. Although we will not follow this strategy we will present this equation in Appendix~\ref{Appendix:geodesic_equation}.

First, however, we will orthogonally decompose the tangent space at $\alpha$ in a similar manner as in the definition of the elastic metric earlier.
Therefore we let
\begin{align*}
\xi =  \xi_m +  \frac12\tr(\alpha^+\xi)\alpha +\xi^{\perp} +\xi_0,
\end{align*}
where
\begin{align*}
\xi_m & =  \frac12\alpha(\alpha^T\alpha)^{-1}(\alpha^T\xi+\xi^T\alpha) - \frac12\tr(\alpha^+\xi)\alpha\\
\xi^{\perp} &= \xi - \alpha(\alpha^T\alpha)^{-1}\alpha^T\xi\\
\xi_0 &= \frac12\alpha(\alpha^T\alpha)^{-1}(\alpha^T\xi-\xi^T\alpha)
\end{align*}
(In the above formulae, we denote the Moore-Penrose inverse of $\alpha$ by $\alpha^+$. It is defined by $\alpha^+=(\alpha^T\alpha)^{-1}\alpha^T$ if $\alpha$ is a $3\times 2$ matrix of rank 2.)
It is easy to check that these terms are orthogonal with respect to the metric \eqref{eq.metric}. We can now obtain a family of metrics on $\Omega_+^1(M, \RR^3)$:
\begin{multline}\label{eq.metric_abcd}
G^{\mathfrak{a},\mathfrak{b},\mathfrak{c},\mathfrak{d}}_{\alpha}(\xi, \xi) \\= \mathfrak{a} G_{\alpha}(\xi_m, \xi_m) + \mathfrak{b} G_{\alpha}\left(\frac12\tr(\alpha^+\xi)\alpha, \frac12\tr(\alpha^+\xi){\alpha}\right)\\
\qquad + \mathfrak{c}G_{\alpha}( \xi^{\perp}, \xi^{\perp}) + \mathfrak{d}G_{\alpha}(\xi_0, \xi_0),
\end{multline}
where the first summand is measuring the deformation of the metric (within the class of metrics with the same volume form), the second summand is measuring the deformation of the volume density, the third summand is measuring the deformation of the normal vector and the last summand can be thought of as measuring the deformation of the local reparametrization.

The following theorem shows the connection of our split metric \eqref{eq.metric_abcd} with the elastic metric \eqref{eq.elasticMetric} on surfaces.
\begin{thm}\label{thm:correspondences}
	If $\mathfrak{d} = 0$, then the pull-back of the  split metric \eqref{eq.metric_abcd} gives rise to the elastic metric \eqref{eq.elasticMetric} on the space of immersions.
\end{thm}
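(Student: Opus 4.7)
The plan is to unfold the pullback by the differential term-by-term. Given an immersion $f$ with variation $h\in T_f\Imm(M,\RR^3)$, set $\alpha = df$ and $\xi = dh$, so in a local orthonormal frame $\alpha$ and $\xi$ become $3\times 2$ matrix fields whose columns are $f_x,f_y$ and $h_x,h_y$. The first task is to rewrite all data of the elastic metric \eqref{eq.elasticMetric} in this language: one has $g = \alpha^T\alpha$ and hence $\delta g = \alpha^T\xi + \xi^T\alpha$; moreover the orthogonal projection onto the normal line is $nn^T = I_3 - \alpha(\alpha^T\alpha)^{-1}\alpha^T$, and a direct differentiation of $n\cdot f_x = n\cdot f_y = 0$ together with $|n|=1$ shows that $\delta n = -\alpha g^{-1}\xi^T n$.

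Next I would identify each of the three non-vanishing pieces of $G^{\mathfrak{a},\mathfrak{b},\mathfrak{c},0}_\alpha(\xi,\xi)$ with one summand of \eqref{eq.elasticMetric}. The normal term is the easiest: since $\xi^\perp = nn^T\xi$, a cyclic-trace manipulation gives
\begin{equation}
\tr\!\bigl(\xi^\perp g^{-1}(\xi^\perp)^T\bigr) = n^T\xi g^{-1}\xi^T n = |\delta n|^2,
\end{equation}
so that $G_\alpha(\xi^\perp,\xi^\perp) = \int_M \langle\delta n,\delta n\rangle\,\mu_g$. For the volume piece I would use $\alpha^+ = g^{-1}\alpha^T$ to get $\tr(\alpha^+\xi) = \tr(g^{-1}\alpha^T\xi) = \tfrac12 \tr(g^{-1}\delta g)$, together with $\tr(\alpha g^{-1}\alpha^T) = \tr(I_2) = 2$, which yields $G_\alpha\bigl(\tfrac12\tr(\alpha^+\xi)\alpha,\tfrac12\tr(\alpha^+\xi)\alpha\bigr) = \tfrac18\int_M \tr(g^{-1}\delta g)^2\mu_g$.

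The shearing term is where the real work lies. Writing $B = g^{-1}\delta g$ and $C = B - \tfrac12\tr(B)I_2$, a short calculation shows $\xi_m = \tfrac12\alpha C$, hence
\begin{equation}
\tr\!\bigl(\xi_m g^{-1}\xi_m^T\bigr) = \tfrac14\,\tr\!\bigl(\alpha C g^{-1} C^T \alpha^T\bigr) = \tfrac14\,\tr\!\bigl(Cg^{-1}C^T g\bigr).
\end{equation}
The key algebraic identity I would invoke is that $\delta g$ is symmetric, so $gB = (gB)^T = B^T g$ and therefore $g^{-1}B^T g = B$; this gives $g^{-1}C^T g = C$ and collapses the expression to $\tfrac14\tr(C^2)$. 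Expanding, $\tr(C^2) = \tr(B^2) - \tfrac12\tr(B)^2$, so
\begin{equation}
G_\alpha(\xi_m,\xi_m) = \tfrac14\!\int_M \tr(g^{-1}\delta g\,g^{-1}\delta g)\,\mu_g - \tfrac18\!\int_M \tr(g^{-1}\delta g)^2\,\mu_g.
\end{equation}

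Finally I would assemble the three contributions. With $\mathfrak{d}=0$ the pullback equals $\tfrac{\mathfrak{a}}{4}\int \tr(g^{-1}\delta g\,g^{-1}\delta g)\mu_g + \tfrac{\mathfrak{b}-\mathfrak{a}}{8}\int \tr(g^{-1}\delta g)^2\mu_g + \mathfrak{c}\int|\delta n|^2\mu_g$, which is \eqref{eq.elasticMetric} under the identification $a = \mathfrak{a}/4$, $b = (\mathfrak{b}-\mathfrak{a})/8$, $c = \mathfrak{c}$. The main obstacle is the trace identity $\tr(Cg^{-1}C^T g) = \tr(C^2)$; everything else is bookkeeping once $\delta g$ and $\delta n$ are expressed via $\xi$.
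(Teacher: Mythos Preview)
Your proof is correct and follows the same overall strategy as the paper: work pointwise, identify each of the three surviving summands of $G^{\mathfrak{a},\mathfrak{b},\mathfrak{c},0}_\alpha(\xi,\xi)$ with the corresponding term of the elastic metric, and read off the dictionary $a=\mathfrak{a}/4$, $b=(\mathfrak{b}-\mathfrak{a})/8$, $c=\mathfrak{c}$.

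The tactical choices differ, however. For the shearing and volume terms the paper routes the computation through the Riemannian submersion $\pi\colon M_+(3,2)\to\on{Sym}_+(2)$, $a\mapsto a^Ta$, using that $v_m$ and $\tfrac12\tr(a^+v)a$ are horizontal and that $d\pi$ is an isometry on the horizontal bundle with respect to the metric $\langle h,k\rangle_g^{\on{Sym}}=\tfrac14\tr(g^{-1}hg^{-1}k)\sqrt{\det g}$; you instead stay in $M_+(3,2)$ and use the symmetry identity $g^{-1}B^Tg=B$ (from $\delta g=(\delta g)^T$) to collapse $\tr(Cg^{-1}C^Tg)$ to $\tr(C^2)$. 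For the normal term the paper differentiates the explicit cross-product formula for $n$ and grinds out the comparison of $\langle dn_a v,dn_a v\rangle$ with $\langle v^\perp,v^\perp\rangle_a$ via the cofactor expression for $(a^Ta)^{-1}$; your use of $nn^T=I-\alpha\alpha^+$ and $\delta n=-\alpha g^{-1}\xi^Tn$ is considerably shorter. What the paper's submersion viewpoint buys is a conceptual explanation for why the $\xi_m$ and trace pieces land exactly on the natural metric on $\on{Sym}_+(2)$; your approach is more self-contained and avoids introducing that auxiliary structure.
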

\begin{proof}
	See Appendix~\ref{appendix.A} for a proof of this result.
\end{proof}

\begin{figure*}[!t]
	\centering
		\includegraphics[scale=0.45]{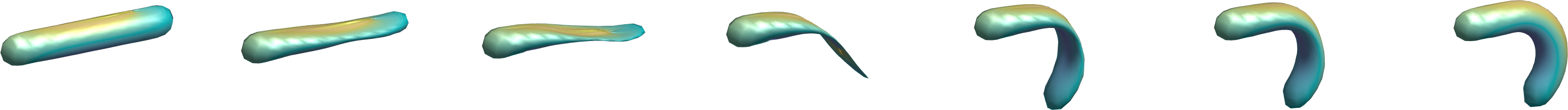}
		\vskip.1in
		\includegraphics[scale=0.45]{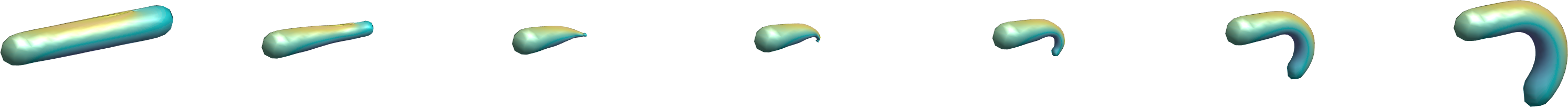}
		\vskip.1in
		\includegraphics[scale=0.45]{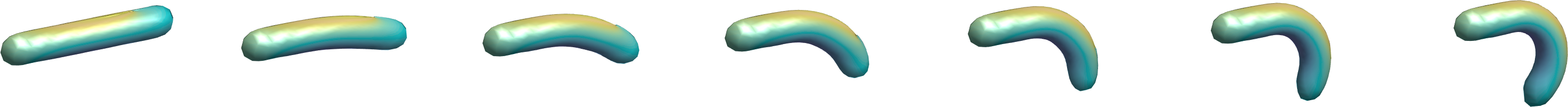}
		\vskip.1in
		\includegraphics[scale=0.45]{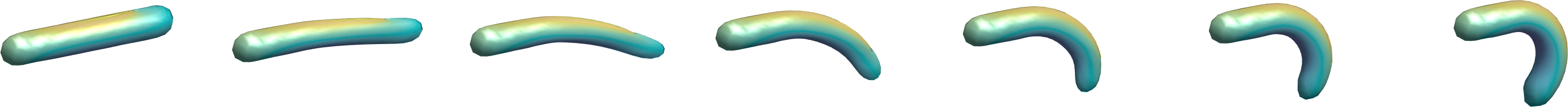}

	\caption{Geodesics between two cylinders in the space of immersions $\Imm(M, \RR^3)$ with respect to different choices of coefficients (from top to bottom): $ (1,1,0,1)$, $(1,0,1,1)$, $(1,1,1,0)$, $(0,\frac12,1,0)$.}
	\label{fig.geodesic.differentWeights2}
\end{figure*}

In Figure~\ref{fig.geodesic.differentWeights2}, we show geodesics between two parametrized cylinders with respect to the split metric \eqref{eq.metric_abcd} for different choices of coefficients $\mathfrak{a},\mathfrak{b},\mathfrak{c}$ and $\mathfrak{d}$. One can see how the choice of coefficients affects the resulting geodesic. Thus, in each specific application, we are now able to adjust the coefficients of the metric in a data driven way to obtain desired deformations between the shapes under consideration.

\begin{rem}
In  \cite{bauer2018OneForms} we have presented a detailed study of the metric~\eqref{eq.metric} on the space  $\Omega_+^1(M, \RR^3)$. In particular we have obtained an explicit formula for the corresponding geodesic initial value problem; in that situation geodesics can be computed pointwise, so the problem reduces to a finite-dimensional ODE which can be solved explicitly, and gives the solution in the infinite-dimensional context we are dealing with here.

	The space of exact one-forms $\Omega^1_{+,ex}(M,\RR^3)$ is, however,  a proper linear subspace of the space of non-singular one-forms $\Omega^1_+(S^2,\mathbb R^3)$, and is not a totally geodesic submanifold of $\Omega^1_+(S^2,\mathbb R^3)$ with respect to the metric~\eqref{eq.metric}.  As the space of immersions corresponds to the space of exact one-forms the obtained explicit formula for geodesics does not directly help to calculate geodesics on the space of immersions, which is the main goal of this article. In order to solve the geodesic problem we will thus introduce a discretization of the metric and solve the geodesic matching problem using path-straightening algorithms.
\end{rem}

Note that the split metric \eqref{eq.metric_abcd} is defined on differentials and thus is, by definition, independent of translations. To show the invariance of the split metric under rigid motions and diffeomorphisms, we now consider the action of the group of rotations $\on{SO}(3)$ on $\Omega_+^1(M,\RR^3)$, which is defined by pointwise left multiplication:
\begin{align}
\on{SO}(3)\times \Omega_+^1(M,\RR^3) &\to \Omega_+^1(M,\RR^3)\\
(R, \alpha)&\mapsto R\alpha,
\end{align}
where $(z\alpha)_x = R\alpha_x$; and the action of the group of diffeomorphisms $\Diff_+(M)$ on $\Omega_+^1(M,\RR^3)$, which is defined via pullback:
\begin{align}\label{eq.action.forms}
\Omega_+^1(M, \RR^3)\times\Diff_+(M) &\to \Omega_+^1(M, \RR^3)\\
(\alpha, \varphi)&\mapsto\varphi^*\alpha,
\end{align}
where
$
(\varphi^*\alpha)_x = \alpha_{\varphi(x)}\circ d\varphi_x.
$
The following proposition summarizes the most important invariances of the metric on $\Omega_+^1(M, \RR^3)$:
\begin{prop}\label{prop.invariances}
	Let $\alpha\in\Omega^1_+(M,\RR^3)$ and $\zeta, \eta\in T_\alpha\Omega^1_+(M,\RR^3)$.
	\begin{enumerate}
		\item The metric \eqref{eq.metric_abcd} is invariant under pointwise left multiplication with $\on{SO}(3)$. I.e., if $R \in \on{SO}(3)$, then
		\begin{equation*}
		G_{\alpha}(\zeta, \eta)=G_{R\alpha}(R\zeta,R\eta)
		\end{equation*}
		\item The  metric \eqref{eq.metric_abcd} is invariant under the right action of the diffeomorphism group, i.e., for any $\varphi \in \Diff_+(M)$ we have
		\begin{equation*}
		G_{\alpha}(\zeta, \eta)=G_{\varphi^*\alpha}(\varphi^*\zeta,\varphi^*\eta).
		\end{equation*}
	\end{enumerate}
\end{prop}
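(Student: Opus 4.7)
The plan is to reduce the proposition to two independent ingredients: invariance of the base metric $G_{\alpha}$ of \eqref{eq.metric} under both actions, and equivariance of the four-term orthogonal decomposition of $\xi$ under both actions. By polarization it suffices to work with the quadratic form $G^{\mathfrak{a},\mathfrak{b},\mathfrak{c},\mathfrak{d}}_{\alpha}(\xi,\xi)$, which is a weighted sum of $G_{\alpha}$ applied to $\xi_{m}$, $\tfrac12\tr(\alpha^{+}\xi)\alpha$, $\xi^{\perp}$, and $\xi_{0}$; once both ingredients are in hand, each summand of \eqref{eq.metric_abcd} is invariant separately.

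The $\on{SO}(3)$ case is immediate and entirely pointwise. Writing $\alpha_{x}$ and $\xi_{x}$ as $3\times 2$ matrices in a local orthonormal frame, $(R\alpha_{x})^{T}(R\alpha_{x})=\alpha_{x}^{T}\alpha_{x}$ since $R^{T}R=I$, so the determinant factor and the interior matrix $(\alpha_{x}^{T}\alpha_{x})^{-1}$ are unchanged, and the outer $R$ and $R^{T}$ inside the trace cancel by cyclicity. The same identities, together with $(R\alpha)^{+}=\alpha^{+}R^{T}$, show that each piece of the decomposition satisfies $(R\xi)_{\bullet}=R\xi_{\bullet}$: every operation in their definitions either left-multiplies $\alpha$ or enters via one of the rotation-invariant combinations $\alpha^{T}\alpha$, $\alpha^{T}\xi$, $\xi^{T}\alpha$, $\alpha^{+}\xi$.

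For diffeomorphisms, fix $x\in M$ and let $D=d\varphi_{x}$ be the $2\times 2$ representing matrix in orthonormal frames at $x$ and $\varphi(x)$; set $A=\alpha_{\varphi(x)}$ and $B=\xi_{\varphi(x)}$. Then $(\varphi^{*}\alpha)_{x}=AD$ and $(\varphi^{*}\xi)_{x}=BD$, so $(\varphi^{*}\alpha)_{x}^{T}(\varphi^{*}\alpha)_{x}=D^{T}A^{T}AD$, and a direct calculation gives
\[
\tr\bigl((\varphi^{*}\xi)_{x}((\varphi^{*}\alpha)_{x}^{T}(\varphi^{*}\alpha)_{x})^{-1}(\varphi^{*}\xi)_{x}^{T}\bigr)\sqrt{\det(\varphi^{*}\alpha)_{x}^{T}(\varphi^{*}\alpha)_{x}}=\tr\bigl(B(A^{T}A)^{-1}B^{T}\bigr)\sqrt{\det A^{T}A}\,|\det D|,
\]
where the $D$'s and $D^{-1}$'s cancel inside the trace. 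Because $\varphi^{*}\mu=|\det D|\mu$ and $\varphi$ is orientation preserving, the leftover $|\det D|$ is precisely what the change of variables $y=\varphi(x)$ demands, and the invariance of $G_{\alpha}$ follows.

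Verifying the equivariance of the decomposition under $\Diff_{+}(M)$ is the only step with any real calculation to it, and this is the main, though still routine, obstacle. The key identity is $(\varphi^{*}\alpha)^{+}=D^{-1}A^{+}$, whence $(\varphi^{*}\alpha)^{+}(\varphi^{*}\xi)=D^{-1}(A^{+}B)D$ and $\tr((\varphi^{*}\alpha)^{+}(\varphi^{*}\xi))=\tr(\alpha^{+}\xi)$ at $\varphi(x)$. Plugging into the four formulas for $\xi_{m}$, $\tfrac12\tr(\alpha^{+}\xi)\alpha$, $\xi^{\perp}$, and $\xi_{0}$, the $D^{-T}$'s produced by the Gram inverses annihilate the $D^{T}$'s produced by the transposes, and in each case the result is the corresponding component evaluated at $\varphi(x)$ and right-multiplied by $D$, which by definition equals $\varphi^{*}(\xi_{m})$, $\varphi^{*}(\tfrac12\tr(\alpha^{+}\xi)\alpha)$, $\varphi^{*}(\xi^{\perp})$, and $\varphi^{*}(\xi_{0})$ respectively. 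Combining this equivariance with the base invariance yields invariance of every weighted summand of \eqref{eq.metric_abcd}, and hence of the proposition.
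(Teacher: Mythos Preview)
Your argument is correct. The paper itself does not spell out a proof: it simply remarks that the verification ``follows exactly as for the metric \eqref{eq.metric}'' and points to \cite{bauer2018OneForms}. Your write-up supplies precisely the details behind that sentence, and in fact goes one step further than a literal reading of the paper's proof suggests: for the weighted split metric \eqref{eq.metric_abcd} one needs not only the invariance of the base metric $G_{\alpha}$ but also the equivariance of the four-way decomposition $\xi\mapsto(\xi_m,\tfrac12\tr(\alpha^{+}\xi)\alpha,\xi^{\perp},\xi_0)$ under both actions, since otherwise the individual summands could mix. You identify this correctly and carry out the pointwise matrix computations (using $(R\alpha)^{+}=\alpha^{+}R^{T}$ and $(\varphi^{*}\alpha)^{+}_x=D^{-1}\alpha^{+}_{\varphi(x)}$) that establish it. So your route is the same direct-verification approach the paper alludes to, just made explicit.
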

\begin{proof}
	The proof of the proposition follows exactly as for the metric \eqref{eq.metric}, which can be found in \cite{bauer2018OneForms}.
\end{proof}

\begin{figure*}[t!]
	\centering
		\includegraphics[scale=0.45]{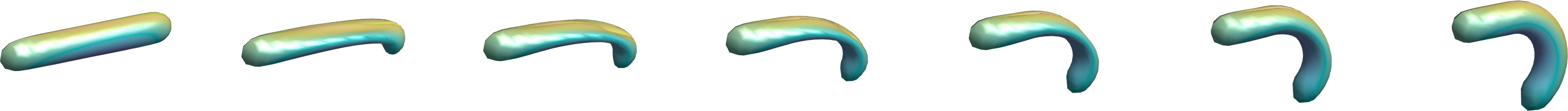}
		\vskip.1in
		\includegraphics[scale=0.45]{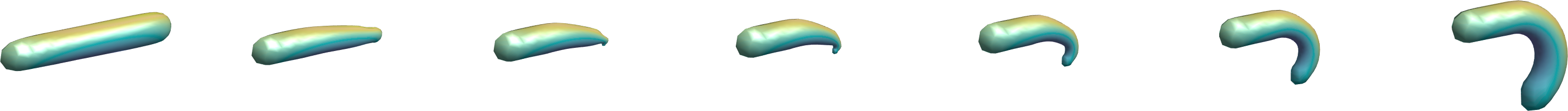}	
		\vskip.1in
		\includegraphics[scale=0.45]{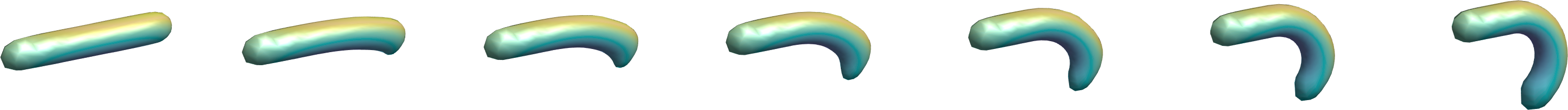}
		\vskip.1in
		\includegraphics[scale=0.45]{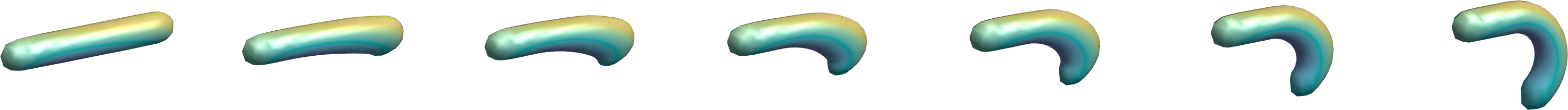}

	\caption{Geodesics between two cylinders in the space of unparametrized surfaces $\Imm(M,\RR^3)/\Diff_+(M)$ with respect to different choices of coefficients (from top to bottom): $ (1,1,0,1)$, $(1,0,1,1)$, $(1,1,1,0)$, $(0,\frac12,1,0)$.}
	\label{fig.geo.unpara.differentWeights2}
\end{figure*}

The group of rotations $\on{SO}(3)$ acts on the space of immersions by left multiplication, which is the same as it acts on the space of one forms.  Thus, by the first statement of Proposition~\ref{prop.invariances}, the pullback metric on $\Imm(M, \RR^3)$ is also invariant under the group of rigid motions $\on{SO}(3)\ltimes\RR^3$. For the standard action of $\Diff_+(M)$ by composition from the right on $\Imm(M, \RR^3)$, the following commutative diagram illustrates that the pull-back action of $\Diff_+(M)$ on $\Omega_+^1(M, \RR^3)$ is compatible with the action of $\Diff_+(M)$ on $\Imm(M, \RR^3)$:
\begin{align}
\xymatrix{ f  \ar[d]_{\text{action on}\ \Imm(M, \RR^3)}\ar[r]^{d}& df \ar[d]^{\text{action on}\ \Omega_+^1(M, \RR^3)} & \\
	f\circ\varphi  \ar[r]^{d} &**[r] \varphi^*df = df\circ d\varphi &}
\end{align}
Therefore, the second statement of Proposition~\ref{prop.invariances} gives the reparametrization invariance of the pullback metric on the space $\Imm(M,\RR^3)$.

Thus the metric on the space of immersions $\Imm(M, \RR^3)$ induces a metric on the space of unparametrized surfaces $\on{Imm}(M, \RR^3)/\Diff_+(M)$ and a metric on the space of unparametrized surfaces modulo rigid motions  $\on{Imm}(M, \RR^3)/\left(\Diff_+(M)\times\on{SO}(3)\ltimes\RR^3\right)$. In Figure~\ref{fig.geo.unpara.differentWeights2} we show geodesics between two cylinders in the space $\on{Imm}(M, \RR^3)/\Diff_+(M)$ with respect to the split metric \eqref{eq.metric_abcd} for different choices of coefficients $\mathfrak{a},\mathfrak{b},\mathfrak{c}$ and $\mathfrak{d}$. The corresponding geodesics in the space $\on{Imm}(M, \RR^3)/\left(\Diff_+(M)\times\on{SO}(3)\ltimes\RR^3\right)$ are shown in Figure~\ref{fig.geo.unparaModSO3.differentWeights2}.

\begin{figure*}[t!]
	\centering
		\includegraphics[scale=0.45]{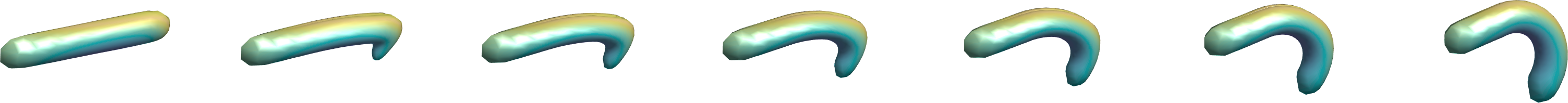}
		\vskip.1in
		\includegraphics[scale=0.45]{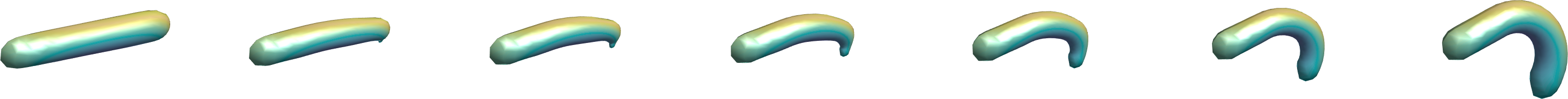}
		\vskip.1in
		\includegraphics[scale=0.45]{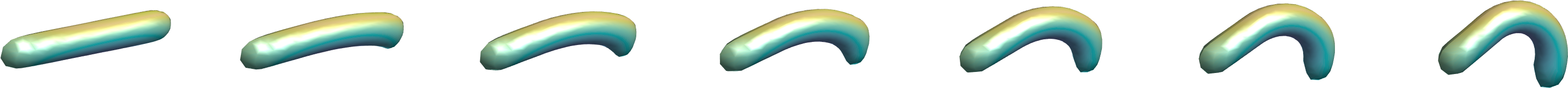}
		\vskip.1in
		\includegraphics[scale=0.45]{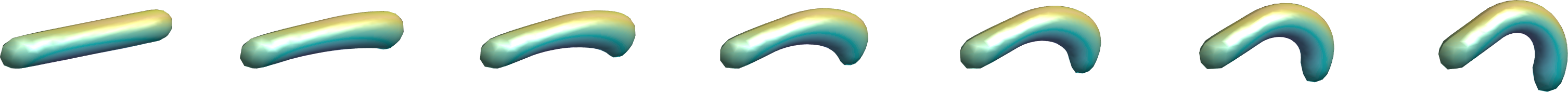}

	\caption{Geodesics between two cylinders in the space of unparametrized surfaces modulo rigid motions $\Imm(M,\RR^3)/\left(\Diff_+(M)\times\on{SO}(3)\ltimes\RR^3\right)$ with respect to different choices of coefficients (from top to bottom): $ (1,1,0,1)$, $(1,0,1,1)$, $(1,1,1,0)$, $(0,\frac12,1,0)$.}
	\label{fig.geo.unparaModSO3.differentWeights2}
\end{figure*}

\section{A numerical framework for the general elastic metric}
In this section we will describe the discretization and optimization procedure that we implemented to solve the geodesic boundary value problem.
From here on we assume that $M = S^2$ and  use a spherical coordinate system to represent an immersion $f: S^2\to\RR^3$ as a function $f:[0,2\pi]\times[0,\pi]\to \RR^3$ such that $f(0, \phi) = f(2\pi, \phi), f(\theta, 0) = f(0,0)$ and $f(\theta, \pi) = f(0, \pi)$, see Remark~\ref{rem:sphericalparametrization} below on how we obtain such (discrete) parametrizations in practice from a triangulated surface.

\begin{rem}\label{rem:sphericalparametrization}
We represent the surface of a given 3D shape with its embedding on a sphere $f:
S^2 \to \mathbb R^3$, which is always possible for genus-0 surfaces. In practice, methods such as conformal mapping introduce significant distortions when dealing with complex shapes that contain many elongated parts.  Since the proposed approach does not require the mapping to be conformal, we adopt the approach of Praun and Hoppe~\cite{praun2003spherical}, which has been implemented by Kurtek et.al.~\cite{kurtek2013landmark}.  The idea is to
progressively embed a surface on a sphere while
minimizing area distortion. The approach starts by reducing the mesh, using
progressive mesh simplification, to a basic polyhedra that can be
easily embedded on $S^2$. Then, it iteratively inserts vertices
and embeds each new vertex inside the spherical kernel of its
one-ring neighborhood while optimizing for the area distortion. The implementation provided in~\cite{kurtek2013landmark}, we reconstruct the mesh up to $1500$ vertices, which is sufficient for computing geodesics. This  procedure produces spherical maps that preserve important shape features as shown in all of the examples in this paper. Recall that, spherical parameterization of high genus surfaces is still an open problem.  Since we are
not aiming at solving the parameterization problem, we focus in this paper on
genus-0 manifold surfaces.
\end{rem}

The identity immersion $i: S^2\to\RR^3$ induces the spherical metric on $S^2$, which will serve as a background metric for the discretization; the vector fields $$\left\{\dfrac{1}{\sin\phi}\dfrac{\partial}{\partial\theta}, \dfrac{\partial}{\partial\phi}\right\}$$ form an orthonormal basis of the tangent space for any $(\theta, \phi)\in [0, 2\pi]\times(0, \pi)$.
With respect to this basis and the standard basis on $\RR^3$, the differential $df$ of an immersion $f = (x, y, z)^T$  can be represented by a field of $3\times 2$ matrices:
\begin{align*}
df\left(\dfrac{1}{\sin\phi}\dfrac{\partial}{\partial\theta}, \dfrac{\partial}{\partial\phi}\right) = \begin{pmatrix}
\dfrac{1}{\sin\phi}\dfrac{\partial x}{\partial\theta}, \dfrac{\partial x}{\partial\phi} \\
\dfrac{1}{\sin\phi}\dfrac{\partial y}{\partial\theta}, \dfrac{\partial y}{\partial\phi} \\
\dfrac{1}{\sin\phi}\dfrac{\partial z}{\partial\theta}, \dfrac{\partial z}{\partial\phi} \\
\end{pmatrix}.
\end{align*}
In the following we denote by $\norm{\cdot}_f$ the norm induced by the pullback of the split metric \eqref{eq.metric_abcd} and let $u\in T_f\Imm(S^2, \RR^3)$ be a tangent vector. Since $u$ can be seen as a function from $S^2$ to $\RR^3$, using this representation the norm of $u$ with respect to the split metric will be given as follows:
\begin{align}
\norm{u}_f = \big[G^{\mathfrak{a},\mathfrak{b},\mathfrak{c},\mathfrak{d}}_{df}(du, du)\big]^{1/2}.
\end{align}

\subsection{Geodesics in the space of surfaces}\label{subsec.geoImm}
We will now describe the solution of the boundary value problem in the pre-shape space of all parametrized surfaces.
Given two parametrized surfaces $f_1$ and $f_2$ we can discretize the linear path connecting $f_1$ and $f_2$ in $T$ time steps:
\begin{align}
f_{\text{lin}}(t_i) = (1 - t_i)f_1 + t_if_2.
\end{align}
where $t_i = i/T, i = 0,\ldots,T$. The differential $df_{\text{lin}}$ is then the linear path between $df_1$ and $df_2$, which stays by definition in the space of exact one-forms for all $i= 0,\ldots, T$. To solve the geodesic boundary value problem we will perturb $f(t)$ in all possible directions that fix the end points and that remain in the space of immersions. Since the map, as defined in equation~\eqref{differential}, is injective, this is equivalent to perturbing $df(t)$ in all possible directions in the space of exact one-forms that keep the  two boundary one-forms fixed.

To obtain a basis of perturbations in the space of immersions, we use the fact that the set of spherical harmonics in each component form a Hilbert basis of $L^2(S^2,\RR^3)$. We truncate this basis at a chosen maximal degree $\degree$ and denote the obtained set by $\{S_i\}$.
The number of elements in this basis is $L = 3((\degree+1)^2-1)$ (here we remove the spherical harmonic of degree 0  and order 0 since it is a constant function, which corresponds to a pure translation).
To calculate the optimal deformation between two given surfaces we aim to minimize the (discrete) path energy over all curves of the form
\begin{align}\label{eq:path}
f(t_0) &= f_1,\quad f(t_T) = f_2\\
f(t_i) &= (1 - t_i)f_1 + t_if_2 + \sum_{j =1}^L\on{Coeff}(j, i)S_j,
\end{align}
where $i = 1,\ldots,T-1$ and $\on{Coeff}$ is a $L\times (T-1)$ coefficient matrix.

The discrete energy functional $F: \mathbb R^{L\times (T-1)}\to \mathbb R$ is then given by
\begin{align}\label{eq.F.para}
F(\on{Coeff}) = \sum_{i =1}^T\norm{f_t(t_{i-1})}^2_{f(t_{i-1})}\Delta T
\end{align}
where the norm $\norm{\cdot}$ is induced by the pullback of the split metric \eqref{eq.metric_abcd},
\begin{align}\label{eq:derivative}
f_t(t_{i-1}) = \frac{f(t_{i}) - f(t_{i-1})}{\Delta T}
\end{align} is the (discrete) derivative of $f(t)$ at $f(t_{i-1})$ and $\Delta T = \frac{1}{T}$ is the width of a sub-interval. Alternatively one can also  discretize the derivative of $f$ using the central difference for interior data points, which makes the energy functional symmetric, but leads to slightly higher computational cost. To find the optimal coefficient matrix $\on{Coeff}$ we employ a BFGS method as provided in the optimize package of \emph{scipy}, where
we calculate the gradient using automatic differentiation in \emph{Pytorch}, which leads to the algorithm described in~Alg.\ref{algorithm:parametrized_surfaces}.

\begin{algorithm}
	\caption{The matching problem for
		parametrized surfaces}\label{algorithm:parametrized_surfaces}
	\begin{algorithmic}[1]		
		\renewcommand{\algorithmicrequire}{\textbf{Input:}}
		\renewcommand{\algorithmicensure}{\textbf{Output:}}
		\REQUIRE
		\hspace{10cm}
		\begin{enumerate}[1)]
			\item the source and target surfaces $f_1$ and $f_2$;
			\item the coefficients $(\mathfrak{a}, \mathfrak{b}, \mathfrak{c}, \mathfrak{d})$ of the metric;
			\item the number of time steps $T$;
			\item a basis $\{S_i, i=1,\ldots, L\}$ for the space of parametrized surfaces.
		\end{enumerate}
		\ENSURE
		\hspace{10cm}
		\begin{enumerate}[1)]
			\item the geodesic $f_{\operatorname{geo}}$ connecting $f_1$ to $f_2$;
			\item the geodesic distance $\operatorname{dist}$ between $f_1$, and $f_2$;
		\end{enumerate}
		\STATE Initialize $\on{Coeff}=0$ and $f(t_i)$ by equation~\eqref{eq:path}.
		\STATE Compute $f_t(t_{i-1})$ by equation~\eqref{eq:derivative}.
		\STATE Define the functional $F(\on{Coeff})$ as in equation~\eqref{eq.F.para}.
		\STATE Minimize $F$ using a BGFS-method, where the gradient of $F$ with respect to $\on{Coeff}$ is caluclated using the automatic differentiation package in Pytorch.
		\STATE Set \begin{align}
		f_{\on{geo}}(t_0) &=  f_1, \qquad f_{\on{geo}}(t_T) = f_2\\
		f_{\on{geo}}(t_i) &= (1-t_i)f_1+t_if_2 +\sum_{j =1}^L\on{Coeff}(j, i)S_j
		\end{align}
		and
		$\on{dist}=\sqrt{F(\on{Coeff})}$.
		\RETURN $f_{\on{geo}}$ and $\on{dist}$.	
	\end{algorithmic}
\end{algorithm}

\subsection{Geodesics in the space of unparametrized surfaces}\label{subsec.geoUnpara}
Now we present our algorithm for calculating geodesics in the space of unparametrized surfaces $\Imm(S^2, \RR^3)/\Diff_+(S^2)$. The main difficulty for this task is to find the optimal $\gamma\in\Diff_+(S^2)$ that realizes the distance
\begin{align}\label{dist:shape_space}
\on{dist}_{\mathcal{S}}([f_1], [f_2]) = \inf_{\gamma\in\Diff_+(S^2)}\on{dist}_{\Imm}(f_1\circ\gamma, f_2),
\end{align}
where $[f]$ is the equivalence class of $f$ under the action of the group of orientation-preserving diffeomorphisms $\Diff_+(S^2)$ and
$\on{dist}_{\mathcal{S}}$ denotes the distance function on the space $\Imm(S^2, \RR^3)/\Diff_+(S^2)$ with respect to the metric that is induced from the split metric \eqref{eq.metric_abcd}.

In order to practically perform the minimization over the infinite dimensional space $\Diff_+(S^2)$ we have to choose a suitable discretization of this group:
Let $\on{Id}$ be the identity map from $S^2$ to itself.  The tangent space $T_{\on{Id}}\Diff_+(S^2)$ is the set of all (smooth) vector fields on $S^2$.
It is known that the set of gradient and skew gradient vector fields of the set of spherical harmonics provide an orthogonal basis for this tangent space -- here orthogonal means with respect to the standard $L^2$ metric. Normalizing these basis we obtain an orthonormal basis for the tangent space $T_{\on{Id}}\Diff_+(S^2)$. To choose a finite dimensional discretization of the tangent space, we truncate this basis at a maximal degree $\degbar$, then the number of elements in this basis is $\bar{L} = 2(\degbar+1)^2-2$. From here on we will denote this truncated basis  by $\{v_i, i = 1,...,\bar{L}\}$.
Let $X^v = (X^v_1, X^v_2,\ldots,X^v_{\bar{L}})$ be the coefficients of a vector field with respect to this basis and consider the induced mapping
\begin{align}\label{eq.gamma}
\gamma = \on{Proj}\left(\on{Id} + \sum_{k=1}^{\bar{L}}X^v_kv_k\right),
\end{align}
where $\on{Proj}$ denotes the map that projects non-zero vectors in $\mathbb R^3$ onto the unit sphere $S^2\subset \mathbb R^3$.
The following result gives an explicit bound on the size of $X^v$, that ensures that the corresponding $\gamma$, defined by~\eqref{eq.gamma}, is a diffeomorphisms of $S^2$.

\begin{thm}\label{Proj_diff}
	Let $ U= \sum_{k=1}^{\bar{L}}X^v_kv_k$ be a vector field on the sphere $S^2$.
	and let $\gamma= \on{Proj}\left(\on{Id} + t U \right)$ be the corresponding map as defined in \eqref{eq.gamma}, for some real $t$.
	Then $\gamma$ is a diffeomorphism if
	\begin{align}\label{tcondition}
	\lvert t\rvert  <  -\frac{1}{\inf_{p\in M} \lambda_-(\nabla U)},
	\end{align}
	where $\nabla U$ is the $(1,1)$ tensor field $v\mapsto \nabla_vU$ and $\lambda_-(\nabla U)$ is the smaller of the two real eigenvalues of the symmetrized matrix $\overline{\nabla U} = \tfrac{1}{2} \big(\nabla U + (\nabla U)^T\big)$. 
\end{thm}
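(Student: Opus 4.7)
The plan is to split the argument into local invertibility of $d\gamma$ under the stated bound, followed by a global upgrade via a covering-degree argument.

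To avoid the nuisance of differentiating $x\mapsto x/|x|$ directly, I would work with the $1$-homogeneous extension $\Psi_t : \RR^3 \setminus \{0\} \to \RR^3$ given by $\Psi_t(x) = x + t|x|\,U(x/|x|)$, which restricts to $p + tU(p)$ on $S^2$ and scales radially. Since radial projection identifies local invertibility of $\gamma$ at $p\in S^2$ with local invertibility of $\Psi_t$ at $p$ viewed as a self-map of $\RR^3$, the first goal is to show that $d\Psi_{t,p}$ is invertible for every $p\in S^2$. In an orthonormal frame $(e_1, e_2, p)$ of $\RR^3$ adapted to $T_pS^2\oplus\RR p$, a direct computation using $\langle U(p),p\rangle=0$ (and its consequence $\langle dU_p(v),p\rangle = -\langle U,v\rangle$) gives
\begin{equation*}
d\Psi_{t,p} \;=\; I + t\,dW_p, \qquad dW_p \;=\;\begin{pmatrix}\nabla U & U\\ -U^{T} & 0\end{pmatrix},
\end{equation*}
where the diagonal blocks are the $(1,1)$-tensor $\nabla U$ on $T_pS^2$ and the scalar $0$ along $\RR p$.

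The key observation will be that the off-diagonal blocks of $dW_p$ are antisymmetric and the radial diagonal entry vanishes, so the symmetric part decouples:
\begin{equation*}
\tfrac{1}{2}\bigl(d\Psi_{t,p}+d\Psi_{t,p}^{T}\bigr) \;=\; I + t\begin{pmatrix}\overline{\nabla U} & 0\\ 0 & 0\end{pmatrix}.
\end{equation*}
Its eigenvalues at $p$ are $1$, $1+t\lambda_+(\overline{\nabla U})$ and $1+t\lambda_-(\overline{\nabla U})$. Under the bound $|t|<-1/\inf_{p}\lambda_-(\overline{\nabla U})$ (which tacitly assumes this infimum is negative; otherwise there is nothing to prove) all three quantities are strictly positive at every $p$. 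A matrix with positive-definite symmetric part has trivial kernel, so $d\Psi_{t,p}$—and hence $d\gamma_p$—is invertible at every point.

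For the global step I would introduce the smooth family $\gamma_s := \on{Proj}(\on{Id} + sU)$ for $s$ varying between $0$ and $t$. The hypothesis is monotone in $|s|$, so each $\gamma_s$ in this interval is a local diffeomorphism of the compact manifold $S^2$, hence automatically a covering map onto $S^2$. Its covering degree is an integer depending continuously on $s$ and equal to $1$ at $s=0$, so it remains $1$ throughout; a degree-one covering is a bijection, making $\gamma=\gamma_t$ a global diffeomorphism. The delicate piece, and the main place where the geometry of $S^2$ (rather than mere linear algebra) enters, is establishing the clean block form of $dW_p$ together with the antisymmetry of its off-diagonal blocks: this is exactly what converts the a priori non-symmetric operator $d\Psi_t$ into one whose symmetric part is manifestly controlled by $\overline{\nabla U}$ and which is compatible with the stated eigenvalue bound.
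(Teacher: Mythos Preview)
Your argument is correct and reaches the same eigenvalue condition as the paper, but by a genuinely different and somewhat cleaner route. The paper works in explicit spherical coordinates: it writes $W=e_1+tU$, computes the Jacobian of $\gamma=W/\lvert W\rvert$ as a scalar triple product, and reduces positivity to the $3\times3$ determinant
\[
D=\det(I+tM)+t^2\langle JU,(I+tM)JU\rangle,\qquad M=\nabla U,
\]
which it then bounds below by $(1+t\lambda_-)^2$ using $\det M=\det\overline M+\tfrac14(b-c)^2$ and the Rayleigh quotient for $\overline M$. Your coordinate-free version lifts to the $1$-homogeneous extension $\Psi_t$, obtains the same $3\times3$ operator in block form, and bypasses the determinant computation entirely by observing that the antisymmetric off-diagonal blocks vanish under symmetrization, leaving $I+t\,\mathrm{diag}(\overline{\nabla U},0)$; invertibility then follows from the one-line fact that a matrix with positive-definite symmetric part is nonsingular. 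The two $3\times3$ matrices are in fact identical up to reordering the radial coordinate first versus last, so the linear-algebraic core coincides; what differs is packaging. Your approach buys a shorter argument that avoids the coordinate bookkeeping of \eqref{thetaderivs}--\eqref{covderivs}, and you explicitly supply the global covering-degree step (local diffeomorphism of compact $S^2$ homotopic to the identity has degree one), which the paper's proof leaves implicit after establishing $D>0$. One small point worth noting: as in the paper, the bound $\lvert t\rvert<-1/\inf_p\lambda_-$ is really tailored to $t\ge0$; for $t<0$ the relevant eigenvalue is $\lambda_+$, and the symmetric statement follows only after replacing $U$ by $-U$.
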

\begin{proof}
	The proof of this result is postponed to Appendix \ref{appendix.A}. Note that since $\mathrm{Tr}{(\nabla U)} = \mathrm{div}{U}$, which integrates to zero over the compact manifold $M$, we know that $\lambda_-(\nabla U)$ is always negative somewhere; hence the bound on $\lvert t\rvert$ is some positive number.
\end{proof}

We are now able to describe the discrete optimization problem on the space of unparametrized surfaces, i.e., we aim to minimize the discrete functional
$\bar F: \RR^{\bar L+ L\times(T-2)}\to \mathbb R$ given by
\begin{align}\label{eq.barF.unpara}
\bar F(X^v, \on{Coeff}) = \sum_{i =1}^T\norm{f_t(t_{i-1})}^2_{f(t_{i-1})}\Delta T,
\end{align}
where the norm $\norm{\cdot}$ is induced by the pullback of the split metric \eqref{eq.metric_abcd}, $\on{Coeff}$, $S_i$, $\Delta T = \frac{1}{T}$ are as in Subsection~\ref{subsec.geoImm} and where the discrete curve $f$ is now of the form
\begin{align}\label{eq:unpara.path}
f(t_0) &= f_1\circ\gamma,\quad f(t_T) = f_2\\
f(t_i) &= (1 - t_i)f_1 + t_if_2 + \sum_{j =1}^L\on{Coeff}(j, i)S_j,
\end{align}
and where the reparametrization $\gamma$ is given by formula \eqref{eq.gamma} with  coefficient vector  $X^v = (X^v_1, X^v_2,\ldots,X^v_{\bar L})$.

\begin{rem}[Initialization over $\Diff_+(S^2)$]\label{rem:initialize}
	When using a gradient based optimization method, it is always an important issue to find a good initialization, as the optimization procedure can get stuck in local minima and is usually sensitive to this initialization.
	In order to find a good initial guess for the optimal reparametrization of the surface $f_1$, we first align the corresponding SRNFs of the two boundary surfaces $f_1$ and $f_2$. This seems a natural initialization for the $(0,\frac12,1,0)$ metric as the $L^2$-distance on the space of SRNFs is a first order approximation of the geodesic distance
	of this metrics. However, in all our experiments it turned out that this initialization works well for other choices of constants as well, as the optimal point correspondences for different choices of constants, albeit different,
	are still similar on a global scale. Furthermore,  we note that any three dimensional rotation can be seen as a diffeomorphism of $S^2$. We use this fact to first minimize
	only over this finite dimensional subgroup of the infinite dimensional reparametrization group. Finally, to initialize the optimization over this finite dimensional group, we first consider
	the icosahedral group, which contains $60$ orientation preserving rotations denoted by $h_i, i = 1,\ldots, 60$, as a finite subset of $\on{SO}(3)$. We then choose the best diffeomorphism among these 60 elements as our initial guess.
	
\end{rem}

\begin{figure}[ht]
	\centering
		\includegraphics[scale=0.55]{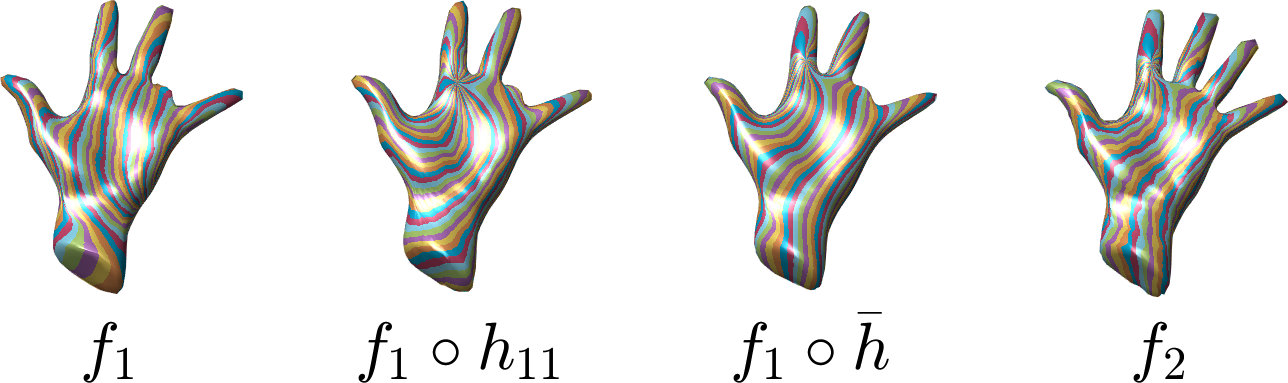}
		\vskip.2in
		\includegraphics[scale=0.55]{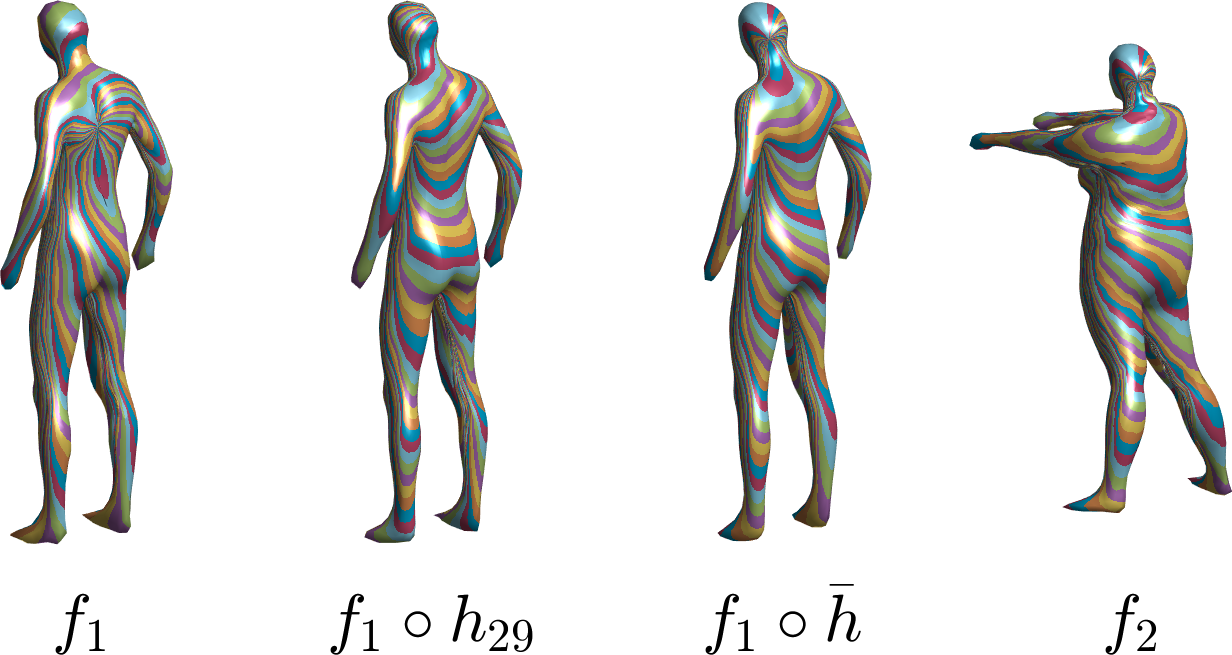}

	\caption{Examples of boundary surfaces before and after the optimization over the reparametrization group with respect to the split $(1,1,1,0)$ metric. Here the second shape shows the parametrization of the first boundary surface after composing by the initial guess in the icosahedral group and the third shape shows the final point correspondences after the full optimization, where $\bar h$ denotes the optimal reparametrization. One can observe how the parametrization of the initial surface successively better matches the parametrization of the target surface (the color map represents the parametrization of the surfaces).}
	\label{fig.initializationOverDiffS2}
\end{figure}

In the following we will describe two algorithms for calculating geodesics in the space of unparametrized surfaces $\Imm(S^2, \RR^3)$: a \emph{joint optimization} procedure and a \emph{coordinate descent} approach, where we minimize alternating in the space of parametrized surfaces and over the reparametrization group separately.

We will start by describing the joint optimization procedure, which is analogous to the optimization for parametrized surfaces with one caveat:
since  formula \eqref{eq.gamma} only leads to diffeomorphisms near the identity, i.e., reparametrizations that map  points on $S^2$ to nearby points,  we will  describe large deformations between $S^2$ as a composition of $N$ such (small) deformations. This will lead us to iteratively solve the joint optimization problem. The corresponding algorithm is described in Alg.\ref{alg.jointopt}.
\begin{algorithm}
	\caption{The joint optimization approach}
	\begin{algorithmic}[1]\label{alg.jointopt}
		\renewcommand{\algorithmicrequire}{\textbf{Input:}}
		\renewcommand{\algorithmicensure}{\textbf{Output:}}
		\REQUIRE
		\hspace{10cm}
		\begin{enumerate}[1)]
			\item the source and target surfaces $f_1$ and $f_2$;
			\item the coefficients $(\mathfrak{a}, \mathfrak{b}, \mathfrak{c}, \mathfrak{d})$ of the metric;
			\item the number of time steps $T$;
			\item bases $\{S_i,i=1,\ldots,L\}$ and $\{v_i,i=1,\ldots,\bar L\}$ for the space of parametrized surfaces and vector fields on $S^2$ resp.;	
			\item the number $N$ that describes the maximal amount of small deformations used.		
		\end{enumerate}
		\ENSURE 		\hspace{10cm}
		\begin{enumerate}[1)]
			\item the geodesic $f_{\operatorname{geo}}$ connecting $[f_1]$ to $[f_2]$;
			\item the geodesic distance $\operatorname{dist}$ between $[f_1]$ and $[f_2]$;
		\end{enumerate}
		\STATE Initialize  $\bar f = f_1$, $\on{Coeff} = 0$
		\WHILE{$k \leq N $}
		\STATE Initialize $\gamma$ by formula \eqref{eq.gamma} with $X^v=0$ and $f(t_i)$ by equation \eqref{eq:unpara.path}.
		\STATE Compute $f_t(t_{i-1})$ by equation \eqref{eq:derivative}.
		\STATE Define the functional $\bar F(X^v, \on{Coeff})$ by \eqref{eq.barF.unpara}
		where the discrete curve $f$ is of the form
		\begin{align}
		f(t_0) &= \bar f\circ\gamma,\quad f(t_T) = f_2\\
		f(t_i) &= (1 - t_i)f_1 + t_if_2 + \sum_{j =1}^L\on{Coeff}(j, i)S_j,
		\end{align}
		\STATE Minimize $\bar F$ using a BFGS method, where the gradients of $\bar F$ with respect to $X^v$ and $\on{Coeff}$ are calculated using the automatic differentiation package in Pytorch.
		\STATE Compute the optimal $\gamma$ using formula \eqref{eq.gamma}.
		\STATE Update $\bar f = \bar f\circ\gamma$.
		\STATE $k = k+1$
		\ENDWHILE
		\STATE Set \begin{align}
		f_{\on{geo}}(t_0) &= \bar f, \qquad f_{\on{geo}}(t_T) = f_2\\
		f_{\on{geo}}(t_i) &= (1-t_i)f_1+t_if_2 +\sum_{j =1}^L\on{Coeff}(j, i)S_j,
		\end{align}
		and $\on{dist} = \sqrt{\bar F(X^v, \on{Coeff})}$.
		\RETURN $f_{\on{geo}}$ and $\on{dist}$
	\end{algorithmic}
\end{algorithm}

As an alternative to the joint optimization we will present in the following a \emph{coordinate descent method}, where we separate the variables in the space of surfaces from the variables that govern the reparametrization of the initial surface, i.e., we alternate between calculating a discrete geodesic, denoted by $f_{\on{opt}}$, between the parametrized surfaces $f_1$ and $f_2$ in the space of immersions $\Imm(S^2, \RR^3)$ and reparametrizing the initial surface $\bar f=f_1$. To  update the reparametrization  we consider only the first two time points of $f_{\on{opt}}$, i.e., $\bar f$ and $f_{\on{opt}}(t_1)$, and define the following functional
\begin{align}\label{eq.Fr}
F_r(X^v) = \norm{f_{\on{opt}}(t_1) - \bar f\circ\gamma}_{\bar{f}\circ\gamma}^2,
\end{align}
where $\gamma$ is given by formula \eqref{eq.gamma} and $X^v=(X^v_1, X^v_2,\ldots,X^v_{\bar L})$.
We can now employ a BFGS method to find the optimal coefficient vector $X^v_{\on{opt}}$, compute $\gamma$ using formula \eqref{eq.gamma}, and then update $\bar f = \bar f\circ\gamma$. Then we repeat this process by recalculating the geodesic in the space of parametrized surfaces (with the changed initial surface $\bar f$). The whole optimization process is summarized in Alg. \ref{alg.coordesc}.

\begin{algorithm}
	\caption{The coordinate descent approach}
	\begin{algorithmic}[1]\label{alg.coordesc}
		\renewcommand{\algorithmicrequire}{\textbf{Input:}}
		\renewcommand{\algorithmicensure}{\textbf{Output:}}
		\REQUIRE
		\hspace{10cm}
		\begin{enumerate}[1)]
			\item the source and target surfaces $f_1$ and $f_2$;
			\item the coefficients $(\mathfrak{a}, \mathfrak{b}, \mathfrak{c}, \mathfrak{d})$ of the metric;
			\item the number of time steps $T$;
			\item bases $\{S_i,i=1,\ldots,L\}$ and $\{v_i,i=1,\ldots,\bar L\}$ for the space of parametrized surfaces and vector fields on $S^2$ resp;	
			\item the number $N$ that describes the maximal amount of small deformations used.		
		\end{enumerate}
		\ENSURE 		\hspace{10cm}
		\begin{enumerate}[1)]
			\item the geodesic $f_{\operatorname{geo}}$ connecting $[f_1]$ to $[f_2]$;
			\item the geodesic distance $\operatorname{dist}$ between $[f_1]$ and $[f_2]$;
		\end{enumerate}
		\STATE Let $\bar f = f_1$ and initialize $\on{Coeff}=0$.
		\STATE Choose a positive integer $N$.
		\WHILE{$k \leq N $}
		\STATE Define the functional $F(\on{Coeff})$ by~\eqref{eq.F.para}
		where the discrete curve $f$ is of the form
		\begin{align}
		f(t_0) &= \bar f,\quad f(t_T) = f_2\\
		f(t_i) &= (1 - t_i)f_1 + t_if_2 + \sum_{j =1}^L\on{Coeff}(j, i)S_j,
		\end{align}
		\STATE Minimize $F\big(\on{Coeff})$ using a BFGS method, where the gradient of $F$ with respect to $\on{Coeff}$ is calculated using the automatic differentiation package in Pytorch.
		\STATE Calculate $f_{\on{opt}}(t_1) = \sum_{i=1}^{L}\on{Coeff}(i,1)S_i$.
		\STATE Initialize $X^v = 0$ and $\gamma$ by formula \eqref{eq.gamma}.
		\STATE Define the functional $F_r(X^v)$ by equation \eqref{eq.Fr}.
		\STATE Minimize $F_r$ using a BFGS method with gradient of $F_r$ with respect to $X^v$ calculated using the automatic differentiation package.
		\STATE Compute $\gamma$ using formula \eqref{eq.gamma}.
		\STATE Update $\bar f = \bar f\circ\gamma$.
		\STATE $k = k+1$
		\ENDWHILE
		\STATE Set \begin{align}
		f_{\on{geo}}(t_0) &= \bar f, \qquad f_{\on{geo}}(t_T) = f_2\\
		f_{\on{geo}}(t_i) &= (1-t_i)f_1+t_if_2 +\sum_{j =1}^L\on{Coeff}(j, i)S_j,
		\end{align}
		and $\on{dist} = \sqrt{F(\on{Coeff})}$.
		\RETURN $f_{\on{geo}}$ and $\on{dist}$
	\end{algorithmic}
\end{algorithm}

\subsection{Geodesics in the space of unparametrized surfaces modulo rigid motions}\label{subsec.geoUnparaModSO3}
Note that the split metric \eqref{eq.metric_abcd} associates no cost to translation and thus the obtained geodesic is automatically in the space of surfaces modulo translations. To calculate the geodesic between two surfaces $[f_1]$ and $[f_2]$ in the space of unparametrized surfaces modulo rigid motions $\Imm(S^2, \RR^3)/\left(\Diff_+(S^2)\times\on{SO}(3)\ltimes\RR^3\right)$, we will need to minimize in addition over the rotation group, i.e.,
solve the optimization problem on $\on{SO}(3)\times\Diff_+(S^2)$:
\begin{align}
\on{dist}_{\mathcal{S}}([f_1], [f_2])
&= \inf_{\substack{R\in \on{SO}(3) \\ \gamma\in\Diff_+(S^2)}}\on{dist}_{\on{Imm}}(f_1\circ\gamma, Rf_2),
\end{align}
where $[f]$ is the equivalence class of $f$ under the actions of $\Diff_+(S^2)$ and $\on{SO}(3)$ and $\on{dist}_{\mathcal{S}}$  denotes the distance function on the space $\Imm(S^2, \RR^3)/\left(\Diff_+(S^2)\times\on{SO}(3)\ltimes\RR^3\right)$.

Let $\norm{\cdot}, \on{Coeff}, S_i, \Delta T$ be as in Subsection~\ref{subsec.geoImm} and let $\bar f $ be the current parametrization of the first boundary surface. It is known that the group of rotations $\on{SO}(3)$ is a three dimensional Lie group and the matrix exponential from its Lie algebra $\mathfrak{so}(3)$ is surjective. Since there is an isomorphism between $\RR^3$ and $\mathfrak{so}(3)$, the discrete optimization problem on the space of unparametrized surfaces modulo rigid motions will be minimizing the discrete functional $\tilde F:\RR^{3+ \bar L+ L\times(T-2)}\to \RR$ given by
\begin{align}
\tilde F(R, X^v, \on{Coeff}) = \sum_{i =1}^{T}\norm{f_t(t_{i-1})}^2_{f_{i-1}}\Delta T,
\end{align}
where the discrete curve in this case is of the form
\begin{align}\label{eq:unpara.modRig.path}
f(t_0) &= \bar f\circ\gamma,\quad f(t_T) = Rf_2\\
f(t_i) &= (1 - t_i)f_1 + t_if(t_T) + \sum_{j =1}^L\on{Coeff}(j, i)S_j,
\end{align}
$i = 1,\cdots, T-1$ and where the reparametrization $\gamma$ is given by formula \eqref{eq.gamma} with  coefficient vector  $X^v = (X^v_1, X^v_2,\ldots,X^v_{\bar L})$. We will tackle this simpler (finite dimensional) optimization problem using an analogous approach as in the previous section and will thus omit further details.

\section{Experiments}
\begin{figure*}[htbp]
	\centering
	\includegraphics[scale=0.45]{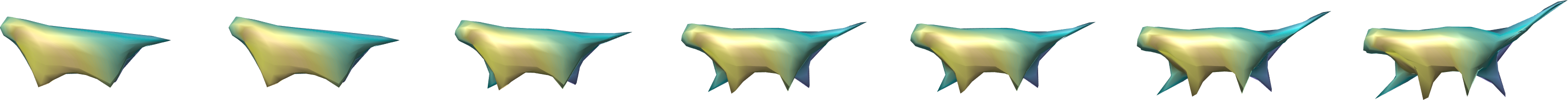}\vskip.25in
	\includegraphics[scale=0.45]{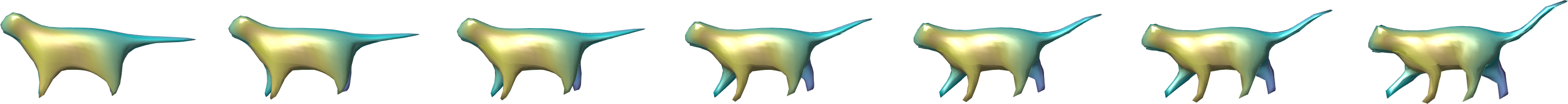}
	\vskip.25in
	\includegraphics[scale=0.45]{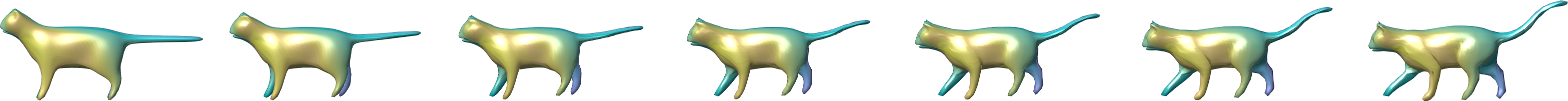} \vskip.25in
	\caption{Example of a geodesic in several resolutions: $12\times 25$ (top), $25\times 49$ (middle) and $50\times 99$ (bottom) with respect to the split $(1,1,0.1,0)$ metric in the space of unparametrized surfaces $\Imm(S^2,\RR^3)/\Diff_+(S^2)$, where $\degree = 7, \degbar= 7$ and $T = 13$.}
	\label{fig.geodesic.multires}
\end{figure*}
 In this section we will present examples of geodesics as calculated using our optimization procedures. 
The human body shapes have been kindly provided by Nil Hasler~\cite{hasler2009statistical} and the hand shape is taken from SHREC07 watertight models.
All other shapes are are courtesy of the TOSCA shape data base \cite{bronstein2008numerical}.

\subsection{Geodesics and Karcher Mean}
In Figure~\ref{fig.geodesic.energy.shape} we present examples of geodesics between given surfaces in the space $\Imm(S^2, \RR^3)/\Diff_+(S^2)$ with respect to the split $(1,1,0.1,0)$ metric and the corresponding evolutions of energies. In all our examples we observed a good and relatively fast convergence of the optimization procedure, and we present some selected results of the resulting deformation and the corresponding computation times in Table~\ref{table.time.multiresResults}. In Figure~\ref{fig.KarcherMean} we present 
  the Karcher mean of a family of cat surfaces with respect to the split $(1,1,0.1,0)$ metric in the space of unprametrized surfaces modulo rigid motions $\Imm(S^2, \RR^3)/(\Diff_+(S^2)\times\on{SO(3)\ltimes\RR^3})$. One can observe that the mean captures the overall characteristics of the family of surfaces under consideration, but simplifies some of the features that undergo high variabilty. 
All results were obtained on a standard laptop without any parallelization or GPU-implementation, which could certainly be used to obtain a significant increase in speed.

\begin{figure}[ht]
	\centering
	\includegraphics[scale=0.30]{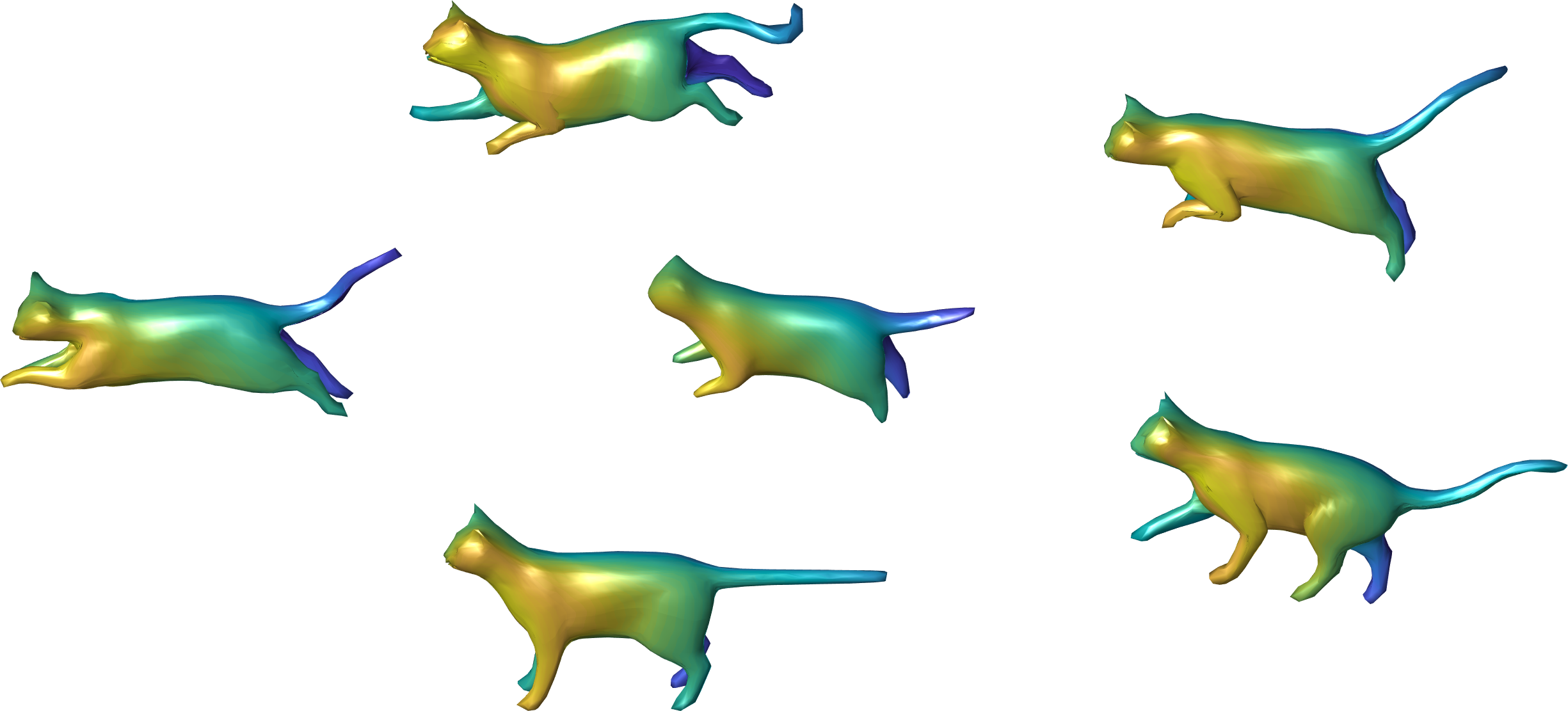}
	\caption{The Karcher mean (middle) of a set of shapes of cats in the space $\Imm(S^2, \RR^3)/(\Diff_+(S^2)\times\on{SO(3)\ltimes\RR^3})$ with respect to the split $(1,1,0.1,0)$ metric.}
	\label{fig.KarcherMean}
\end{figure}

\begin{table} [ht]
	\begin{adjustbox}{width=\columnwidth,center}
		\begin{tabular}{ | c | c | c | c | }
			\hline
			$\raisebox{.15cm}{\phantom{M}}\raisebox{-.15cm}{\phantom{M}}$
			Boundary Surfaces & Resolution & Iter & RunTime \\ \hline
			{\multirow{3}{*}{\vspace{-.3cm}\hspace{0cm}\includegraphics[scale=.13]{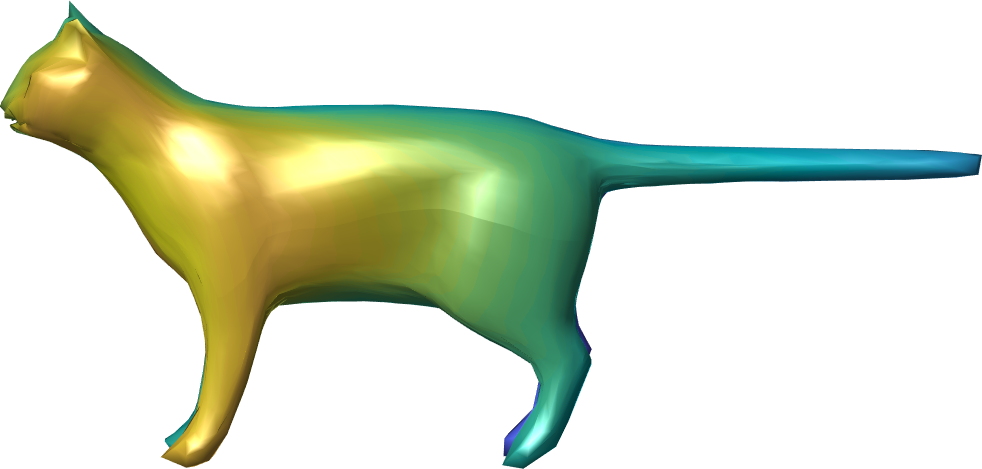}\hskip.1in\includegraphics[scale=.13]{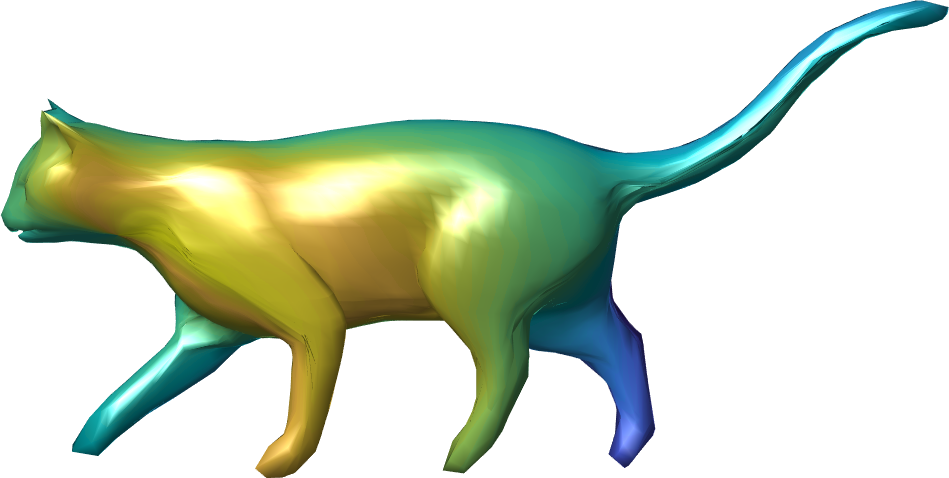}}}
			& $\raisebox{.15cm}{\phantom{M}}\raisebox{-.15cm}{\phantom{M}}$
			low & 114 & 39.7s \\ \cline{2-4}
			& $\raisebox{.15cm}{\phantom{M}}\raisebox{-.15cm}{\phantom{M}}$
			middle & 237 & 3min 3s \\ \cline{2-4}
			& $\raisebox{.15cm}{\phantom{M}}\raisebox{-.15cm}{\phantom{M}}$
			high & 235 & 14min 2s \\ \hline
			{\multirow{3}{*}{\vspace{-.6cm}\hspace{0cm}\includegraphics[scale=.15]{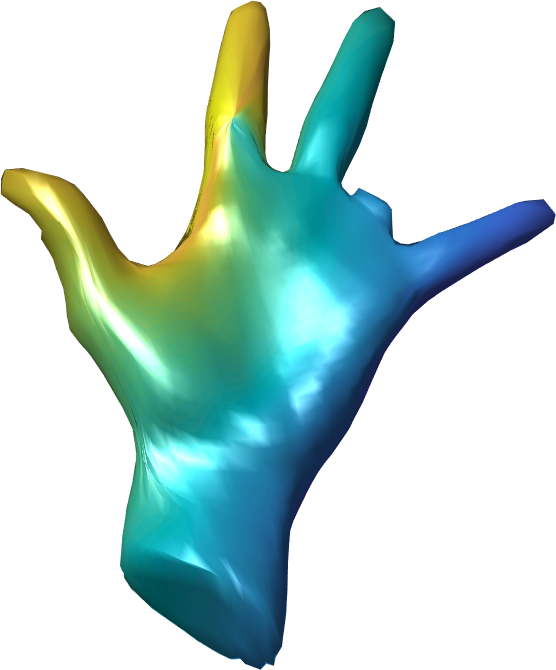}\hskip.2in\includegraphics[scale=.15]{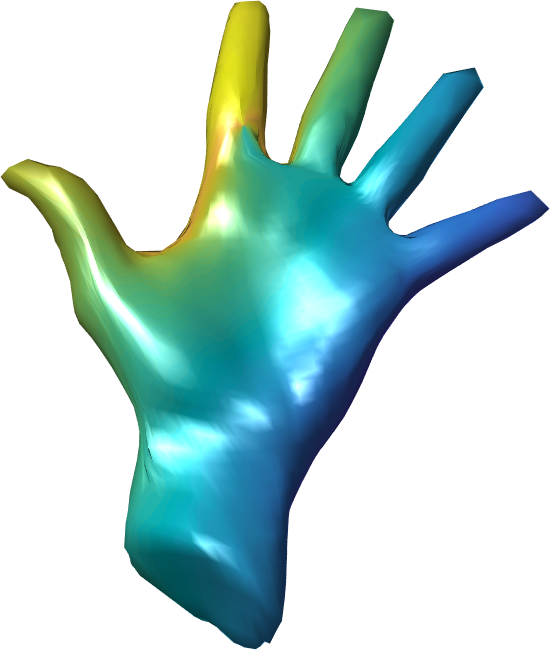}}}
			& $\raisebox{.25cm}{\phantom{M}}\raisebox{-.15cm}{\phantom{M}}$
			low & 42 & 40.7s\\ \cline{2-4}
			& $\raisebox{.25cm}{\phantom{M}}\raisebox{-.15cm}{\phantom{M}}$
			middle & 113 & 1min 35s\\ \cline{2-4}
			& $\raisebox{.25cm}{\phantom{M}}\raisebox{-.25cm}{\phantom{M}}$
			high & 139 & 8min 25s\\ \hline
			{\multirow{3}{*}{\vspace{-.4cm}\hspace{0cm}\includegraphics[scale=.17]{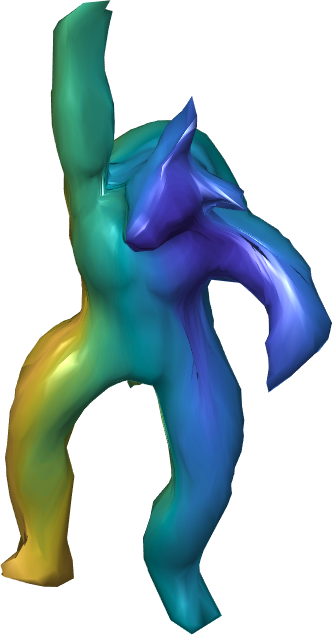}\hskip.3in\includegraphics[scale=.17]{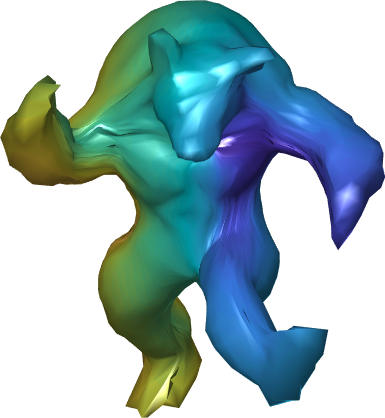}}}
			& $\raisebox{.15cm}{\phantom{M}}\raisebox{-.15cm}{\phantom{M}}$
			low & 88 & 32.5s\\ \cline{2-4}
			& $\raisebox{.15cm}{\phantom{M}}\raisebox{-.15cm}{\phantom{M}}$
			middle & 220 & 2min 22s\\ \cline{2-4}
			& $\raisebox{.15cm}{\phantom{M}}\raisebox{-.15cm}{\phantom{M}}$
			high & 193 & 10min 27s\\ \hline
		\end{tabular}
	\end{adjustbox}
	\caption{Numerical results of matching surfaces with different resolutions in time and space: low: $12 \times 25$, $\degree = \degbar = 5$, $T = 5$; middle: $25\times 49$, $\degree = 7$, $\degbar = 8$, $T = 10$; high: $50 \times 99$, $\degree = 9$, $\degbar = 11$, $T = 15$. Here Iter denotes the number of iterations until convergence in the optimization process.}
	\label{table.time.multiresResults}
\end{table}

\begin{figure*}[ht]
	\centering
	\includegraphics[scale=0.45]{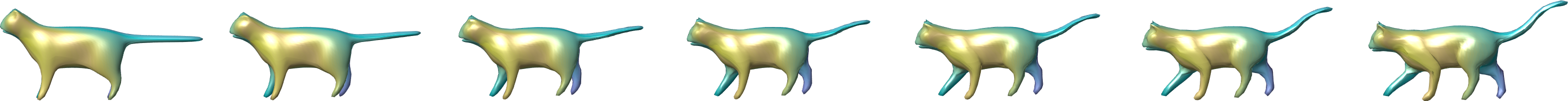}\vskip.25in
	\includegraphics[scale=0.45]{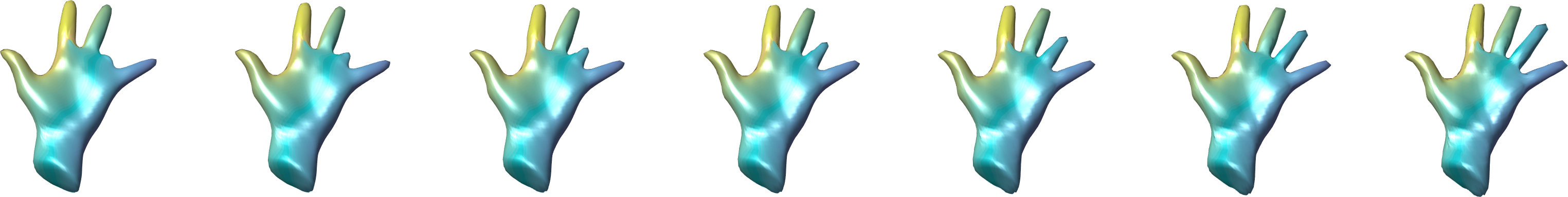}
	\vskip.145in
	\includegraphics[scale=0.45]{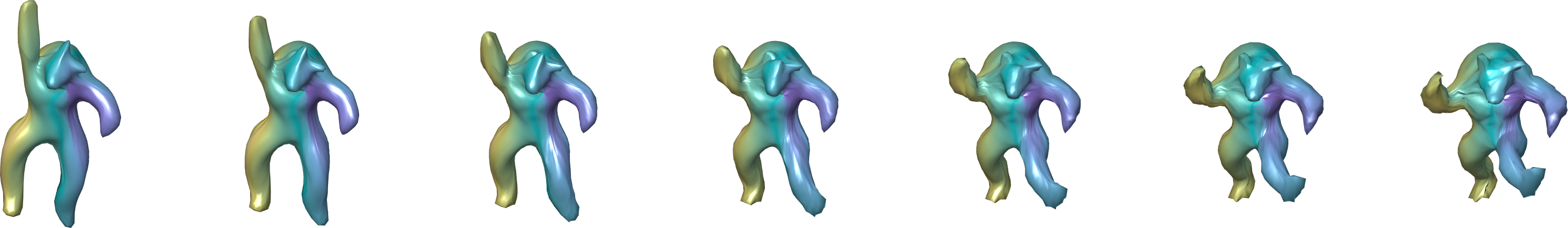} \vskip.25in
	
	\includegraphics[scale=0.35]{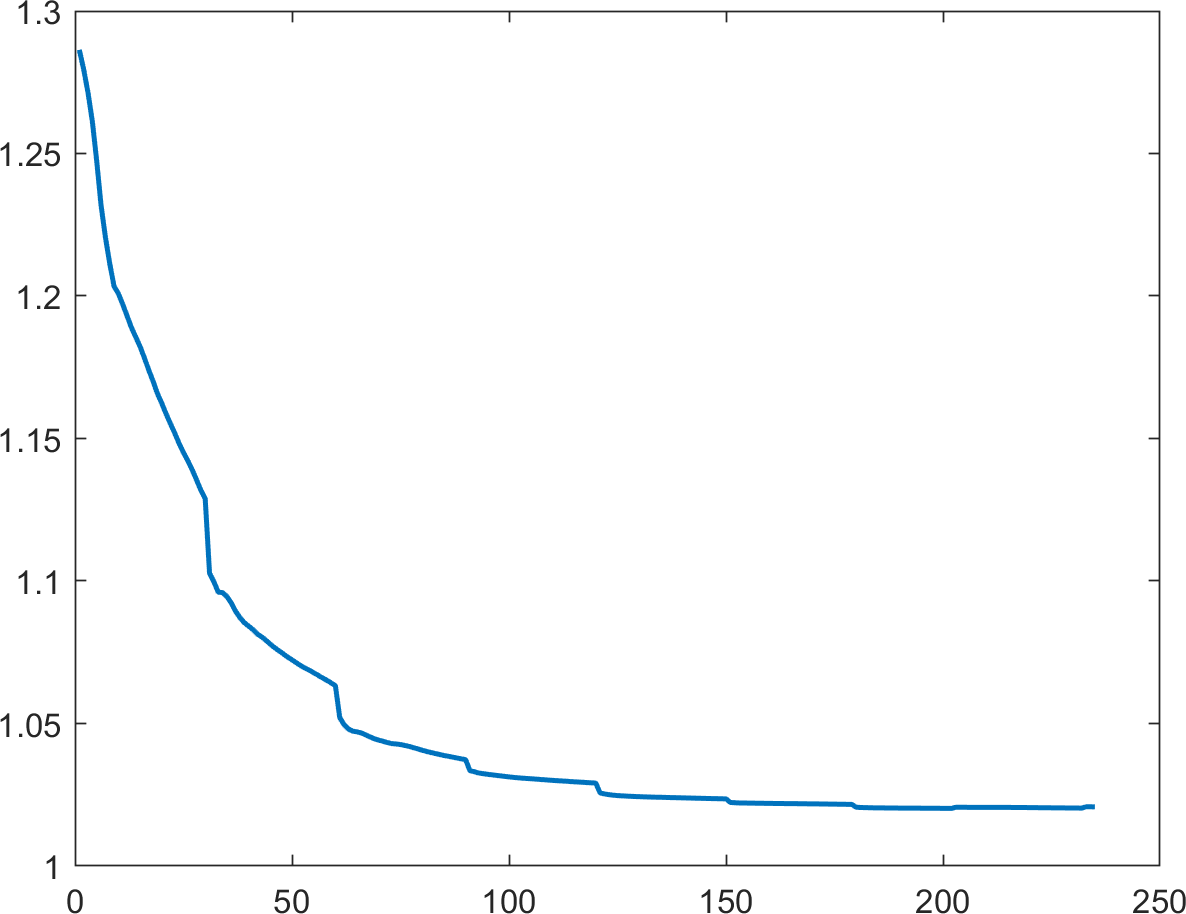}\hskip.35in
	\includegraphics[scale=0.35]{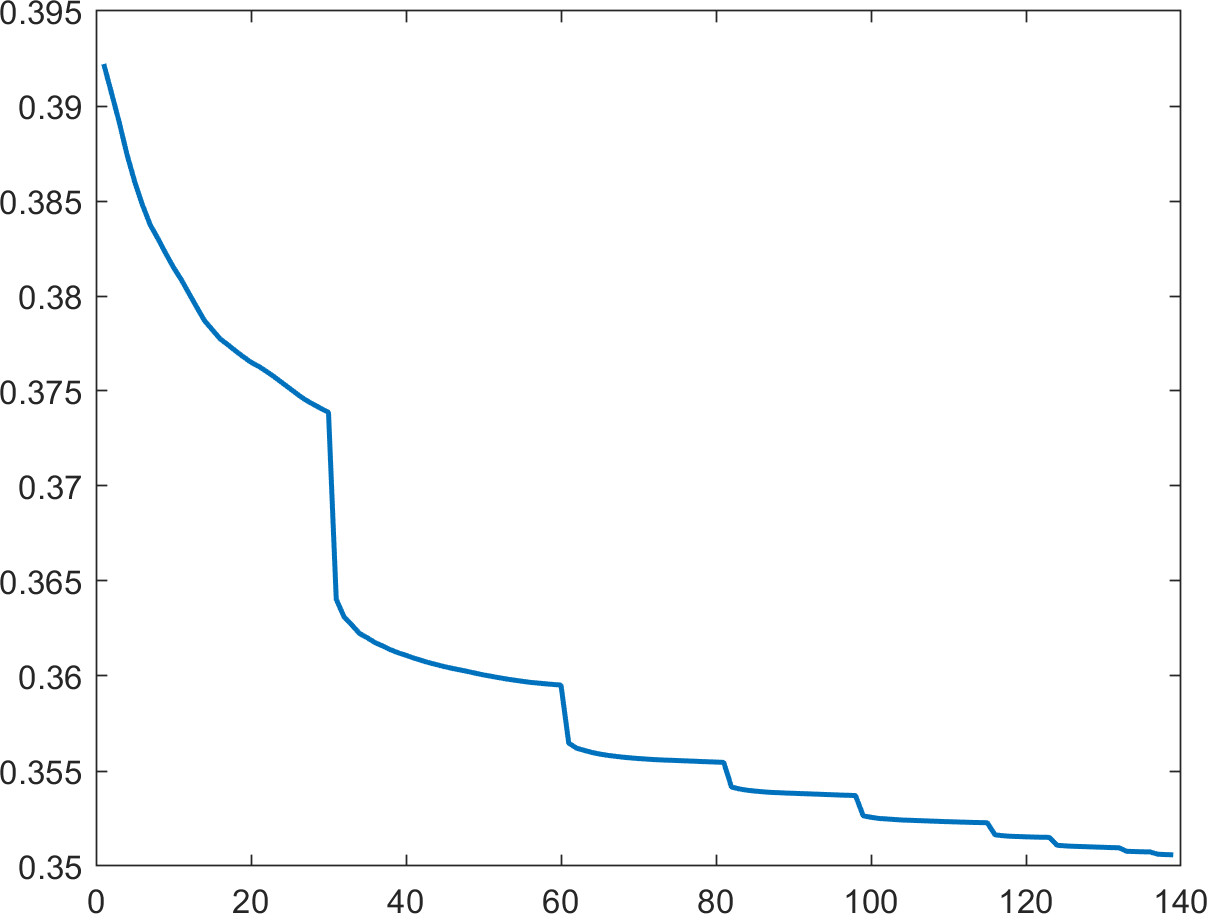}\hskip.35in
	\includegraphics[scale=0.35]{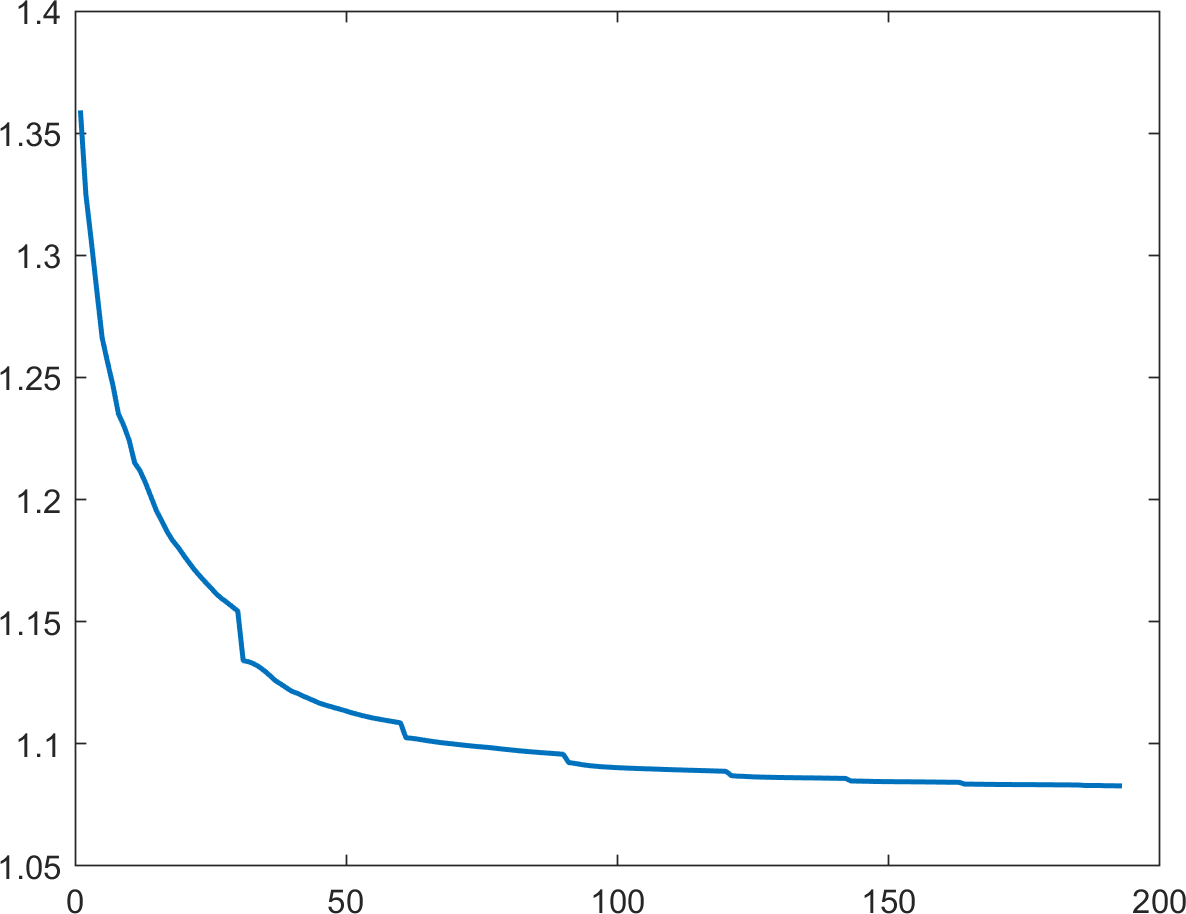}
	\caption{Examples of geodesics w.r.t. to the $(1,1,0.1,0)$ metric in the space of shapes $\Imm(S^2, \RR^3)/\Diff_+(S^2)$, where we choose  a resolution of $50 \times 99$, a maximal degree of spherical harmonics $\degree = \degbar = 7$ and  $13$ timesteps, i.e., we search in an approximately $2205$ dimensional space. The corresponding energy evolution for each example is shown on the bottom from left to right.}
	\label{fig.geodesic.energy.shape}
\end{figure*}

\begin{rem}
	The results in Table~\ref{table.time.multiresResults} suggest that our methods are well-suited for multiresolution methods, i.e., to solve the geodesic matching problem first on a coarser resolution (in both time, space, and degree of spherical harmonics) and then use an upsampled version of the previously obtained solution as initial guess for solving a high resolution version of the matching problem. Our numerical framework allows for these approaches in all available parameters and , in all our experiments this procedure seems to allow for as moderate improvements in the speed of the optimization. See Figure~\ref{fig.geodesic.multires} for an example of a multi-resolution geodesic.
\end{rem}

\subsection{Comparison to the SRNF-framework}
Finally, we aim to compare the results obtained with our method to the results using the inversion of linear paths in the SRNF-space. The SRNF metric corresponds to the split metric \eqref{eq.metric_abcd} with constant $(0, 1/2, 1, 0)$, see Appendix~\ref{appendix.A}. To demonstrate this correspondence, we consider 4 pairs of boundary surfaces. We calculated the length of the linear path between each pair of surfaces under the split  $(0, 1/2, 1, 0)$ and the length of the image of the linear path under the SRNF framework. The relative errors between the lengths for different time step sizes are shown in Table~\ref{table.isometry} and demonstrate that these two metrics indeed coincide.

\begin{table} [ht]
	\begin{adjustbox}{width=\columnwidth,center}
		\begin{tabular}{ | c | c | c | c | c|}
			\hline
			$\raisebox{.15cm}{\phantom{M}}\raisebox{-.15cm}{\phantom{M}}$
			Boundary Surfaces & Tstps & $L_l$ & $L_{L^2}$ & Relative Error \\ \hline
			{\multirow{3}{*}{\vspace{-.5cm}\hspace{-.3cm}\includegraphics[scale=.13]{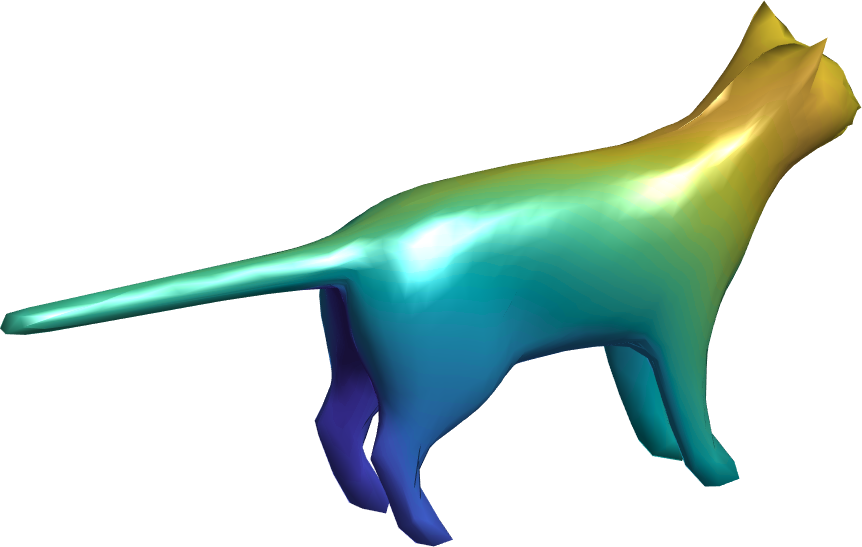}\quad\includegraphics[scale=.13]{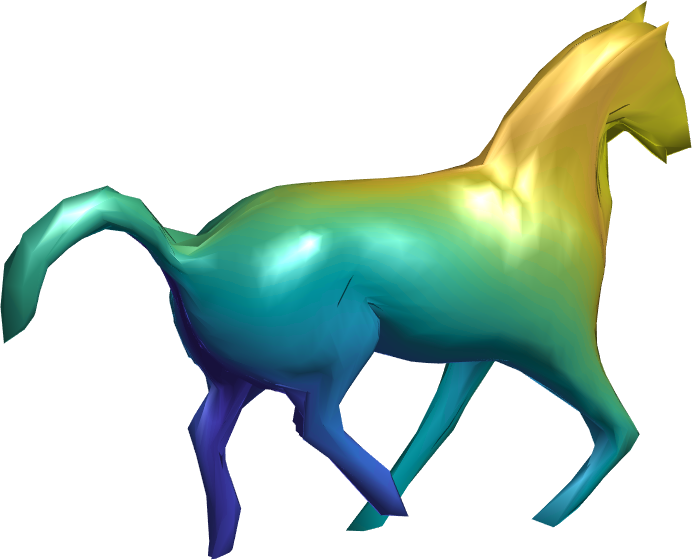}}}
			& $\raisebox{.15cm}{\phantom{M}}\raisebox{-.15cm}{\phantom{M}}$
			13 & 0.8917 & 0.8872 & 0.00502 \\ \cline{2-5}
			& $\raisebox{.15cm}{\phantom{M}}\raisebox{-.15cm}{\phantom{M}}$
			20 & 0.8952 & 0.8908 & 0.00494 \\ \cline{2-5}
			& $\raisebox{.15cm}{\phantom{M}}\raisebox{-.15cm}{\phantom{M}}$
			99 & 0.8952 & 0.8952 & 0.00002\\ \hline
			{\multirow{3}{*}{\includegraphics[scale=.2]{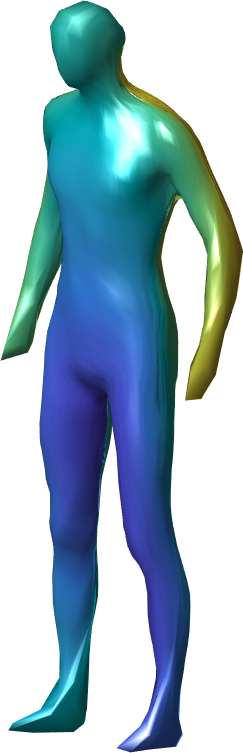}\,\qquad\includegraphics[scale=.2]{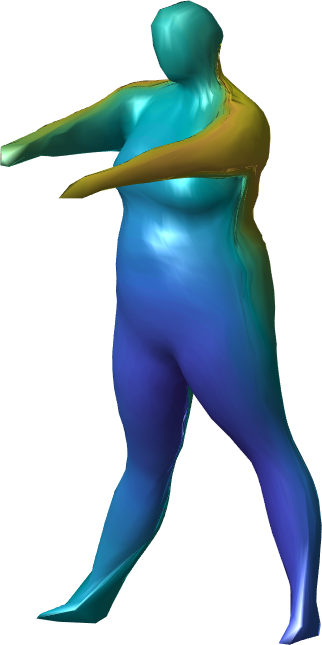}}}
			& $\raisebox{.25cm}{\phantom{M}}\raisebox{-.15cm}{\phantom{M}}$
			13 & 0.7384 & 0.7350 & 0.00456 \\ \cline{2-5}
			& $\raisebox{.25cm}{\phantom{M}}\raisebox{-.15cm}{\phantom{M}}$
			20 & 0.7372 & 0.7359 & 0.00169 \\ \cline{2-5}
			& $\raisebox{.25cm}{\phantom{M}}\raisebox{-.25cm}{\phantom{M}}$
			99 & 0.7371 & 0.7367 & 0.00053\\ \hline
			{\multirow{3}{*}{\vspace{-.5cm}\includegraphics[scale=.13]{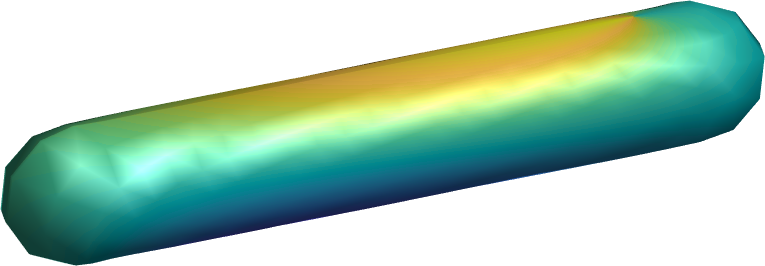}\qquad\includegraphics[scale=.13]{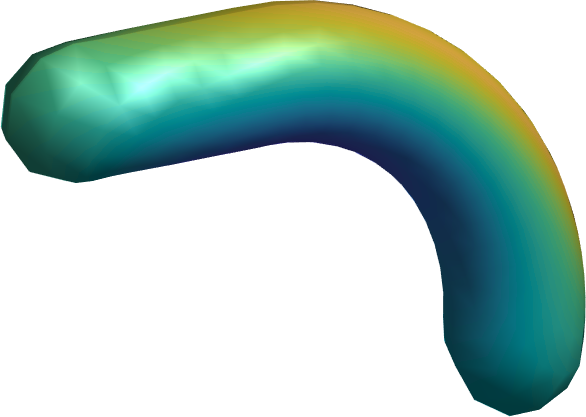}}}
			& $\raisebox{.15cm}{\phantom{M}}\raisebox{-.15cm}{\phantom{M}}$
			13 & 0.6722 & 0.6717 & 0.00072 \\ \cline{2-5}
			& $\raisebox{.15cm}{\phantom{M}}\raisebox{-.15cm}{\phantom{M}}$
			20 & 0.6722 & 0.6720 & 0.00030\\ \cline{2-5}
			& $\raisebox{.15cm}{\phantom{M}}\raisebox{-.15cm}{\phantom{M}}$
			99 & 0.6723 & 0.6723 & 0.00002 \\ \hline
			{\multirow{3}{*}{\vspace{-.5cm}\includegraphics[scale=.13]{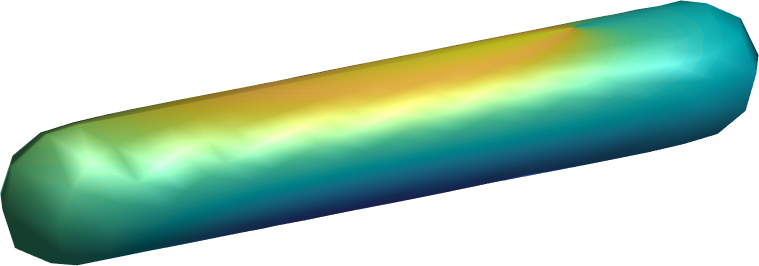}\qquad\includegraphics[scale=.13]{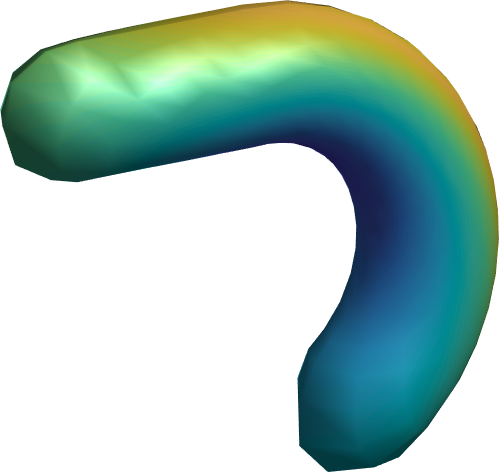}}}
			& $\raisebox{.15cm}{\phantom{M}}\raisebox{-.15cm}{\phantom{M}}$
			13 & 0.9875 & 0.9853 & 0.00226 \\ \cline{2-5}
			& $\raisebox{.15cm}{\phantom{M}}\raisebox{-.15cm}{\phantom{M}}$
			20 & 0.9874 & 0.9866 & 0.00084\\ \cline{2-5}
			& $\raisebox{.15cm}{\phantom{M}}\raisebox{-.15cm}{\phantom{M}}$
			99 & 0.9875 & 0.9875 & 0.00004\\ \hline
		\end{tabular}\\
	\end{adjustbox}
	\caption{Comparisons between the lengths of linear paths with respect to the split $(0,\frac12, 1,0)$ metric and the lengths of the SRNF representations of the linear paths with respect to the $L^2$ metric. $L_l$: the length of linear path; $L_{L_2}$: the length of the SRNF representation of the linear path with respect to the $L^2$ metric.}
	\label{table.isometry}
\end{table}

\begin{figure*}[ht]
	\centering
		\hspace{-.2in}\includegraphics[scale=0.45]{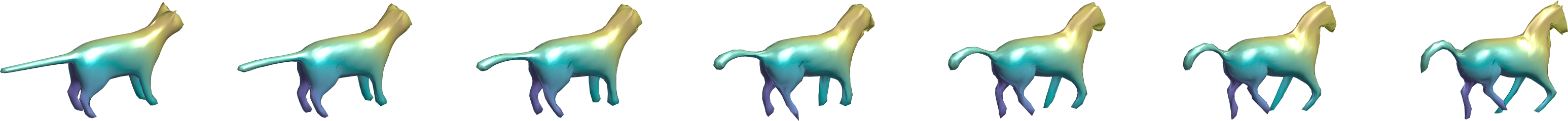}\\
		\hspace{-.2in}\includegraphics[scale=0.45]{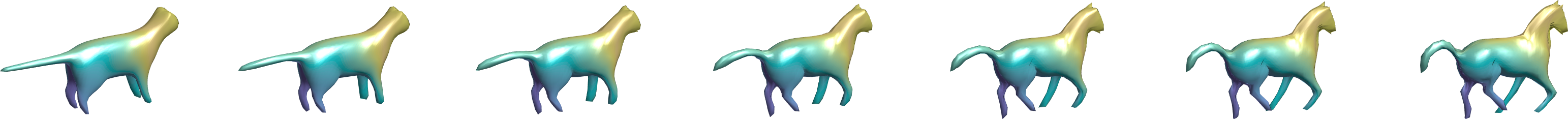}
		\vskip.1in
		\includegraphics[scale=0.45]{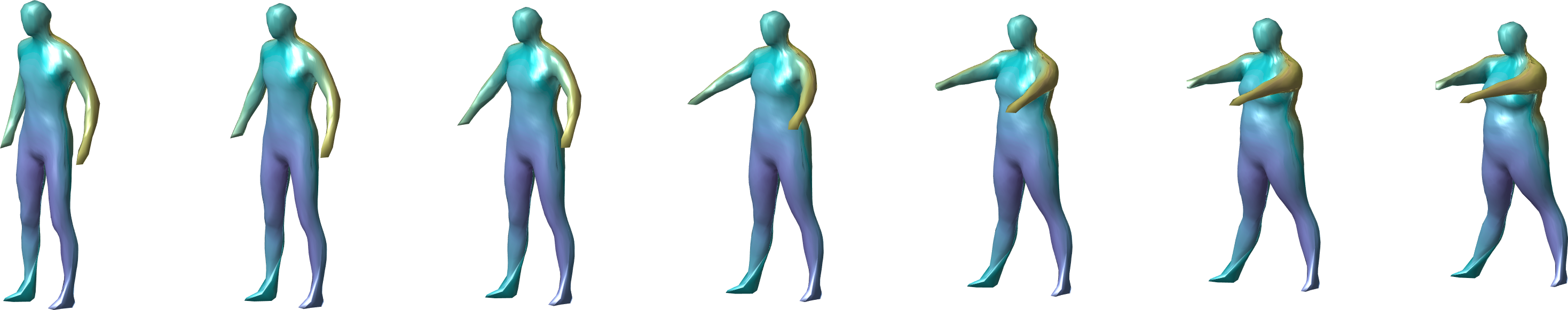}
		\includegraphics[scale=0.45]{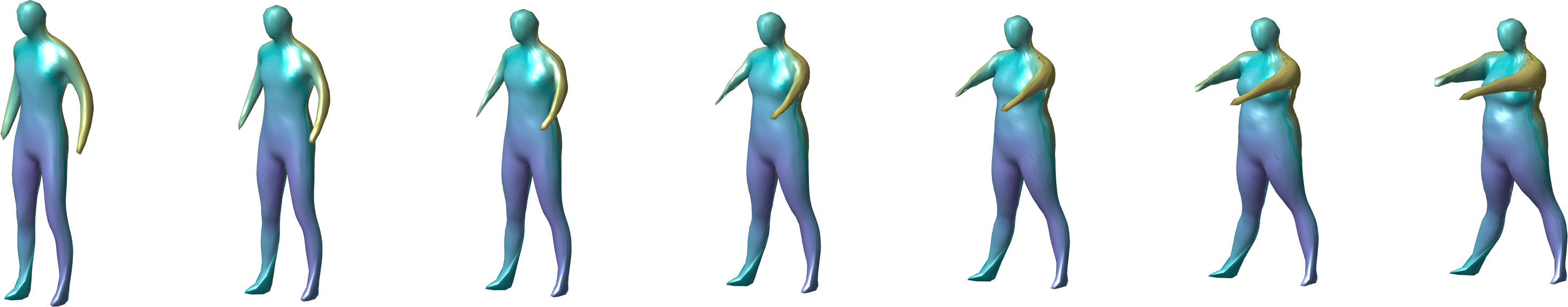}

	\caption{Comparisons of geodesics with respect to the  split $(0,\frac12,1,0)$ metric and the approximated inversions of straight lines under the SRNF framework. Row $1,3$: the approximated inversions under the SRNF framework; Row $2,4$: geodesics under the split $(0,\frac12,1,0)$ metric in the space of parametrized surfaces.}
	\label{fig.SurfacesComparison}
\end{figure*}

\begin{figure*}[ht]
	\centering
	\includegraphics[scale=0.45]{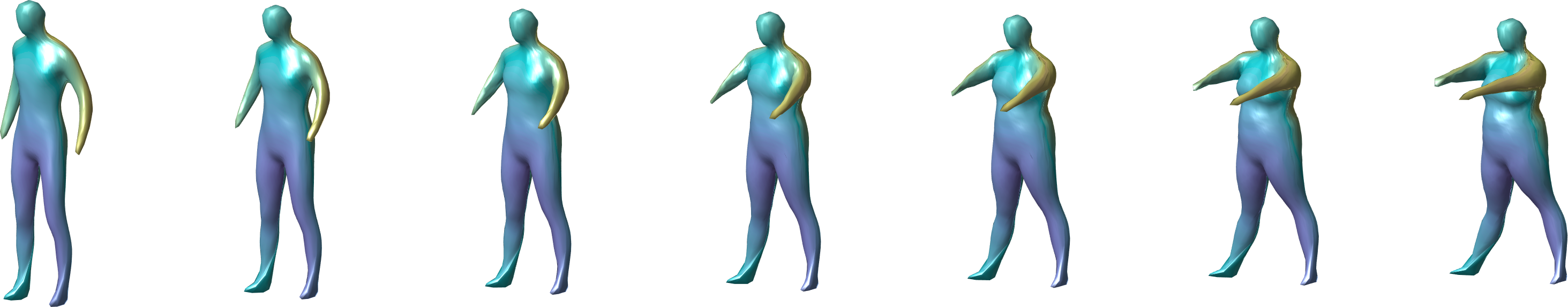} 
	\includegraphics[scale=0.45]{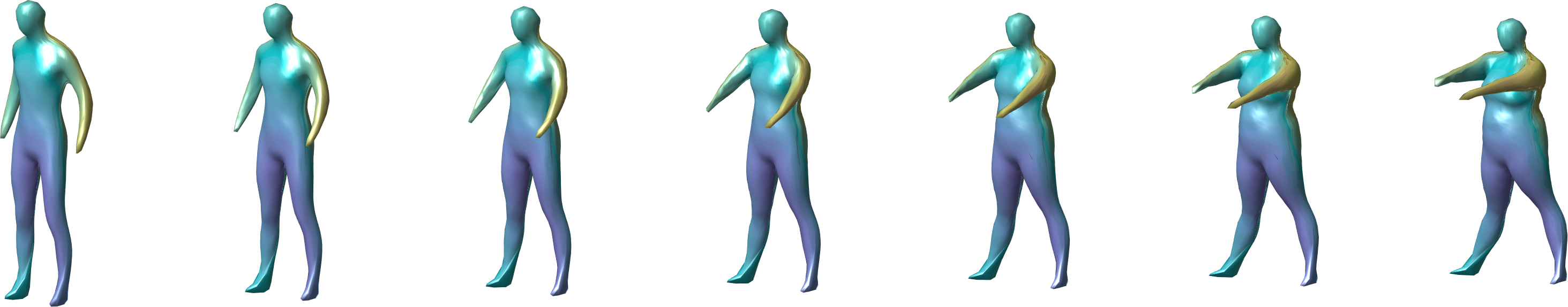}
	\caption{Geodesics between two human body surfaces in the space of unparametrized surfaces $\Imm(S^2,\RR^3)/\Diff_+(S^2)$ with respect to two different choices of coefficients  $(0,1,1,0)$ (top) and $(1,1,0.1,0)$ (bottom).
	In particular in the deformation of the arms one can observe the influence of the constants.}
	\label{fig.humanGeoDiffCoe}
\end{figure*}

Since the image of the SRNF map is not convex in $L^2$, the linear interpolation between two SRNFs may not have a preimage under the SRNF map. Also, even for functions that are in the image of SRNF map, the inverse does not have an analytic expression; in fact, such an expression does not exist in general, since the SRNF map is not injective. As a way to overcome this difficulty Laga et al. \cite{laga2017numerical} introduced a numerical method to calculate an approximated inversion of any path between two given SRNFs. In practice this has been used to approximate the geodesic by inverting the linear path between the given SRNFs. We want to remark here that the algorithm of \cite{laga2017numerical} could also be used to invert a geodesic in the image of the SRNF map. However, calculating geodesics in the image of the SRNF map is a non-trivial process, which to the best of our knowledge has not yet been attempted.  We would expect that this procedure would lead to minimizers that recover the minimizers obtained in the present framework. In Figure~\ref{fig.SurfacesComparison}, we consider two pairs of surfaces and calculate the geodesic between each pair of the boundary surfaces under the split $(0,1/2,1,0)$ metric with $\degree = \degbar = 7,  T = 13$ in the space of unparametrized surfaces $\Imm(S^2,\RR^3)/\Diff_+(S^2)$. The comparisons of these geodesics with the approximated inversions of the linear paths between the boundary surfaces are shown in Figure~\ref{fig.SurfacesComparison}. One can see that in the last row for the geodesic between the human body surfaces, the arms are shrinking at the beginning and then stretching, which maybe not a desired deformation for some applications. However, by adjusting the coefficients of our metric we could obtain geodesics with the natural behavior, see Figure~\ref{fig.humanGeoDiffCoe} for geodesics with respect to different choices of coefficients.

In Table~\ref{table.comparison} we compare the lengths of geodesics for four pairs of surfaces in the space of parametrized surfaces $\Imm(S^2,\RR^3)$, the lengths of the approximated inversions (with 7 time steps) under the split $(0,1/2,1,0)$ metric and the $L^2$ differences between the SRNFs of the boundary surfaces.  One can see from the table that for each pair of surfaces, the length of the geodesic is much closer to the $L^2$ difference than the length of the approximated inversion of the straight line between the SRNFs of the boundary surfaces. Note that the $L^2$-difference is a lower bound for the geodesic distance that will, in general, be strictly smaller then the true geodesic distance, as the image of the SNRF-represntation is not a totally geodesic (open) subspace of the space of all $L^2$-functions.

\begin{table}[ht]
	\begin{adjustbox}{width=\columnwidth,center}
		\begin{tabular}{ | c | c | c | c |c | c|}
			\hline
			$\raisebox{.15cm}{\phantom{M}}\raisebox{-.15cm}{\phantom{M}}$
			Boundary Surfaces & $L_l$ & $L_g$ & $L_i (7\text{stps})$ & $L^2$ Diff \\ \hline
			$\raisebox{.7cm}{\phantom{M}}\raisebox{-.3cm}{\phantom{M}}$
			\vspace{-0.1cm}\hspace{-.8cm}\includegraphics[scale=.13]{Figures/cat.png}\quad\includegraphics[scale=.13]{Figures/horse.png} & 0.8932 & 0.7948 & 1.1442 & 0.6130 \\ \hline
			$\raisebox{1.7cm}{\phantom{M}}\raisebox{-.3cm}{\phantom{M}}$
			\hspace{-.5cm}\includegraphics[scale=.2]{Figures/human1.png}\,\qquad\includegraphics[scale=.2]{Figures/human2.png} & 0.7380 & 0.7171 & 0.7919 & 0.6543 \\ \hline
			$\raisebox{.7cm}{\phantom{M}}\raisebox{-.3cm}{\phantom{M}}$
			\hspace{-.5cm}\includegraphics[scale=.13]{Figures/cylinder11.png}\qquad\includegraphics[scale=.13]{Figures/cylinder12.png} & 0.6723 & 0.5985 & 0.8393 & 0.5938\\ \hline
			$\raisebox{.7cm}{\phantom{M}}\raisebox{-.3cm}{\phantom{M}}$
			\hspace{-.5cm}\includegraphics[scale=.13]{Figures/cylinder21.png}\qquad\includegraphics[scale=.13]{Figures/cylinder22.png} & 0.9875 & 0.7973 & 1.2159 &  0.7786 \\ \hline
		\end{tabular}
	\end{adjustbox}
	\caption{The lengths of deformations with respect to the $(0,\frac12,1,0)$ metric between boundary surfaces with the maximal spherical harmonic degree of $7$ and time step size of $25$. $L_l$: the length of the linear path between boundary surfaces;  $L_g$: the length of geodesic as calculate in our numerical framework; $L_i$: the length of approximated inversion from SRNF straight line; $L^2$-Diff: the $L^2$ difference between the SRNFs of these boundary surfaces.}
	\label{table.comparison}
\end{table}

\begin{appendices}
\section{The geodesic equation}\label{Appendix:geodesic_equation}

In the following we give the geodesic equation on the space of immersions $\Imm(M, \RR^3)$ with respect to the pullback of the metric \eqref{eq.metric} on the space of $1$-forms.  In this Appendix, we will assume that the domain $M$ is a compact orientable surface without boundary, because we will need to use the Hodge decomposition. We will view $\alpha \in \Omega^1_+(M,\mathbb{R}^3)$ as a vector-valued $1$-form with components $( \alpha^1, \alpha^2,\alpha^3)$, where each $\alpha^i$ is a $1$-form on $M$ in the usual sense. Then the metric \eqref{eq.metric} can be rewritten as

\begin{align*}
G_{\alpha}(\xi, \xi) &= \int_M \tr(\xi \Lambda_{\alpha}\xi^T) \, \varphi_{\alpha} \, \mu  \\
&= \sum_{i=1}^3 \int_M \langle \xi_x^i, \Lambda_{\alpha} \xi_x^i\rangle \, \varphi_{\alpha} \, \mu
\end{align*}
where $\xi = (\xi^1, \xi^2,\xi^3)\in T_{\alpha}\Omega^1_+(M,\mathbb{R}^3)$, $\Lambda_{\alpha} = (\alpha^T\alpha)^{-1}$ is the induced Riemannian metric on $1$-forms on $M$, and $\varphi_{\alpha} = \sqrt{\det{(\alpha^T\alpha)}}$ is the induced volume form on $M$. As such all computations can be done one component at a time.

If $F = (f^1, f^2, f^3)$ is a vector-valued function with each $f^i\colon M\to \mathbb{R}$ real-valued, then $\beta = dF$ is a vector-valued $1$-form with $\beta^i = df^i$. The Hodge decomposition tells us that every $1$-form $\xi$ may be written as
$$ \xi = df + \gamma,$$
where $\delta \gamma = 0$ and $\delta:\Omega^1(M, \RR)\to C^{\infty}(M)$ is the codifferential operator.

The space $\Imm(M,\RR^3)$ is formally a submanifold of $\Omega^1_+(M,\mathbb{R}^3)$, and thus by general submanifold geometry we know that the geodesic equation on $\Imm(M,\mathbb{R}^3)$ will be given by
\begin{align}\label{submfdgeod}
\frac{D}{dt} \frac{d}{dt} \alpha = \gamma, \qquad \alpha = d\Phi, \qquad \delta \gamma = 0.
\end{align}
Since $\delta \gamma = 0$, we know that $\star\gamma$ is an exact form, where $\star$ denotes the Hodge star operator. Then there is a function $p$, unique up to a constant, such that $dp = \star\gamma$. We obtain
\begin{align}
	\Delta p = \delta dp = \delta\!\star\!\gamma = \star d\left( \frac{D}{dt} \frac{d}{dt} \alpha\right).
\end{align}
In coordinates $(u,v)$ on $M$ the operator $\star d$ is given by:
$$ \star d (f \, du + g \, dv) = \frac{g_u - f_v}{\varphi}.$$

From the geodesic equation on $\Omega^1_+(M,\mathbb{R}^3)$ with respect to the metric \eqref{eq.metric} in our previous paper \cite{bauer2018OneForms}, we know the covariant derivative is given by
\begin{align}
\frac{D}{dt} \frac{d\alpha}{dt} = \alpha_{tt} - \alpha_t(\alpha^T\alpha)^{-1} \alpha^T_t \alpha - \alpha_t \alpha^+\alpha_t + (\alpha_t\alpha^+)^T\alpha_t \\- \tfrac{1}{2} \tr(\alpha_t(\alpha^T\alpha)^{-1}\alpha^T_t)\alpha + \tr(\alpha_t\alpha^+) \alpha_t.
\end{align}
Since $d\alpha_{tt} = 0$, we obtain
\begin{multline*}
\Delta p = \star d\Big(-\alpha_t(\alpha^T\alpha)^{-1} \alpha^T_t \alpha\Big) - \alpha_t \alpha^+\alpha_t + (\alpha_t\alpha^+)^T\alpha_t \\
- \tfrac{1}{2} \tr(\alpha_t(\alpha^T\alpha)^{-1}\alpha^T_t)\alpha + \tr(\alpha_t\alpha^+) \alpha_t\Big).
\end{multline*}
Let $L = \alpha_t \alpha^+$. Then $\Phi$ is a geodesic on $\Imm(M, \RR^3)$ if and only if we have
\begin{align}
	\star \Delta p = d\Omega\wedge d\Phi
\end{align}
where $\Omega = LL^T + L^2 - L^T L +\frac12\tr(L^TL) - \tr(L)L$.
Here we emphasize that $p$ and $\Phi$ are actually vector-valued functions, so these computations are done componentwise for each $i\in \{1,2,3\}$. In other words, we have 
$$ \star \Delta p_i = \sum_{j=1}^3 d\Omega_{ij} \wedge d\Phi_j, \qquad i\in \{1,2,3\}.$$

\section{Proofs}\label{appendix.A}
\begin{proof}[Proof of Theorem~\ref{thm:correspondences}]
	In the following we prove the correspondence between our split metric on the space $\Omega_+^1(M,\RR^3)$ and the SRNF metric on the space of surfaces. Using the point-wise property of our metric we will focus on the corresponding split metric on the matrix space $M_+(3,2)$. For $a\in M_+(3,2)$ and $v\in T_aM_+(3,2)$, we decompose $v$ into four parts
	\begin{align}
	v = v_m + \frac12\tr(a^+v)a + v^{\perp} + v_0,
	\end{align}
	where
	\begin{align}
	v_m &= \frac12a(a^Ta)^{-1}(a^Tv + v^Ta) - \frac12\tr(a^+v)a\\
	v^{\perp} &= v - a(a^Ta)^{-1}a^Tv\\
	v_0 &= \frac12a(a^Ta)^{-1}(a^Tv - v^Ta).
	\end{align}
	The corresponding split metric on $M_+(3,2)$ is then of the form:
	\begin{multline}\label{eq.splitmetric.matrix}
	G^{\mathfrak{a},\mathfrak{b},\mathfrak{c},\mathfrak{d}}_{a}(v, v)\\
	=  \mathfrak{a}\langle v_m, v_m\rangle_a + \mathfrak{b}\left\langle\frac12\tr(a^+v)a, \frac12\tr(a^+v)a\right\rangle_a \\
	\qquad + \mathfrak{c}\langle v^{\perp}, v^{\perp}\rangle_a + \mathfrak{d}\langle v_0, v_0\rangle_a,
	\end{multline}
	Now consider the projection $\pi\colon M_+(3,2)\to\on{Sym}_+(2), \ a\mapsto a^Ta$. This projection is a Riemannian submersion, where $M_+(3,2)$ carries the metric \eqref{eq.splitmetric.matrix} with choices of constants $(1,1,1,1)$ and the space $\on{Sym}_+(2)$ is equipped with the following metric:
	\begin{align*}
	\langle h, k\rangle_g^{\on{Sym}} = \frac14\tr(g^{-1}hg^{-1}k)\sqrt{\det (g)}.
	\end{align*}
	The horizontal bundle with respect to the projection $\pi$ is given by
	\begin{align*}
	\mathcal{H}_a = \{u\in M(3,2)\,|\,ua^+\in\on{Sym}(n)\}
	\end{align*}
	and the differential $d\pi$ induces an isometry $$d\pi_a: \mathcal{H}_a \to T_{\pi(a)}\on{Sym}_+(m).$$ It is easy to check that $v_m$ and $\frac12\tr(a^+v)a$ are horizontal vectors.
	
	Let $g = \pi(a) =a^Ta$. By computation we have
	\begin{align*}
	\tr(a^+v) = \frac12\tr(g^{-1}d\pi_av)
	\end{align*}
	and
	\begin{align*}
	d\pi_a(v_m) &= a^Tv_m + v_m^Ta
	= a^Tv + v^Ta - \tr(a^+v)a^Ta\\
	&= d\pi_av - \frac12\tr\left(g^{-1}d\pi_a v\right)g.
	\end{align*}
	Therefore the first term in \eqref{eq.splitmetric.matrix} becomes
	\begin{align*}
	&\langle v_m, v_m\rangle_a\\
	&= \left\langle d\pi_a(v_m), d\pi_a(v_m)\right\rangle_{\pi(a)}^{\operatorname{Sym}}\\
	&=\left\langle d\pi_av - \frac12\tr\left(g^{-1}d\pi_a v\right)g, d\pi_av - \frac12\tr\left(g^{-1}d\pi_a v\right)g\right\rangle_g^{\operatorname{Sym}} \\
	&= \left\langle d\pi_av, d\pi_av\right\rangle_g^{\operatorname{Sym}} - \tr\left(g^{-1}d\pi_a v\right)\left\langle d\pi_av, g\right\rangle_g^{\operatorname{Sym}}\\
	&\qquad\qquad\qquad + \frac14\tr^2\left(g^{-1}d\pi_a v\right)\left\langle g, g\right\rangle_g^{\operatorname{Sym}} \\
	&= \frac14\tr\left( g^{-1}d\pi_avg^{-1}d\pi_av\right)\sqrt{\det(g)}\\ &\qquad\qquad\qquad-\frac18\tr^2\left(g^{-1}d\pi_a v\right)\sqrt{\det(g)}
	\end{align*}
	and the second term becomes
	\begin{align}
	\left\langle \frac12\tr(a^+v)a, \frac12\tr(a^+v)a\right\rangle_a &= \frac12\tr^2(a^+v) \sqrt{\det(a^Ta)}\\
	&= \frac18\tr^2(g^{-1}d\pi_av)\sqrt{\det (g)}.
	\end{align}
	For the third term in \eqref{eq.splitmetric.matrix}, we consider the corresponding unit normal map on the space of matrices given by
	\begin{align*}
	n: M_+(3,2) &\to \mathbb R^3\\
	a &\mapsto \frac{a_1\times a_2}{|a_1\times a_2|} = \frac{a_1\times a_2}{\sqrt{\det(a^Ta)}},
	\end{align*}
	where $a_1$ and $a_2$ are the first and the second columns of $a$, respectively.
	For any tangent vector $u = \begin{pmatrix}
	u_1 & u_2
	\end{pmatrix}$ at $a$, the differential of $n$ at $a$ is
	\begin{align*}
	dn_a(u) = \frac{u_1\times a_2+ a_1\times u_2 - (a_1\times a_2)\tr(a^+u)}{\sqrt{\det(a^Ta)}}.
	\end{align*}
	It is easy to check that $aa^+v$ is in the kernel of the differential $dn_a$, i.e.,
	\begin{align}
	dn_a(v) = dn_a(v^{\perp} + aa^+v) = dn_a(v^{\perp}).
	\end{align}
	Note that $\tr(a^+v^{\perp}) = 0$, $aa^+a_1 = a_1$ and $aa^+a_2 = a_2$. Using the following identity for three dimensional vectors $b, c, d, e$: $$(b\times c)\cdot (d\times e) = b^Tdc^Te - b^Tec^Td$$ and the formula for the inverse of $a^Ta$:
	\begin{align*}
	(a^Ta)^{-1} = \frac{1}{\det(a^Ta)}\begin{pmatrix}
	a_2^Ta_2 & -a_1^Ta_2\\
	a_1^Ta_2 & a_1^Ta_1
	\end{pmatrix},
	\end{align*}
	we have
	\begin{align}
	&\langle dn_av, dn_av\rangle_{\mathbb R^3} = \langle dn_av^{\perp}, dn_av^{\perp}\rangle_{\mathbb R^3}\\
	=& \frac{1}{\det(a^Ta)}\langle v^{\perp}_1\times a_2+ a_1\times v^{\perp}_2, v^{\perp}_1\times a_2+ a_1\times v^{\perp}_2\rangle_{\mathbb R^3}\\
	=& \frac{1}{\det(a^Ta)}\big[\left(v_1^Tv_1 - v_1^Taa^+v_1\right)a_2^Ta_2\\
	&\, - 2\left(v_1^Tv_2 - v_1^Taa^+v_2\right)a_1^Ta_2 + \left(v_2^Tv_2 - v_2^Taa^+v_2\right)a_1^Ta_1\big],
	\end{align}
	where $v^{\perp}_1, v^{\perp}_2$ are the first and the second columns of $v^{\perp}$ and $v_1, v_2$ are the first and the second columns of $v$, respectively. It follows that
	\begin{align*}
	&\left\langle v^{\perp}, v^{\perp}\right\rangle_a\sqrt{\det(a^Ta)} = \tr(v^{\perp}(a^Ta)^{-1}(v^{\perp})^T)\det(a^Ta)\\
	=& \left(\tr(v(a^Ta)^{-1}v^T) - \tr(aa^+v(a^Ta)^{-1}v^T)\right)\det(a^Ta)\\
	=& \left(\tr(v^Tv(a^Ta)^{-1}) - \tr(v^Taa^+v(a^Ta)^{-1})\right)\det(a^Ta)\\
	=& \tr\left(\begin{pmatrix}
	v_1^T(I - aa^+)v_1 & v_1^T(I - aa^+)v_2\\
	v_1^T(I - aa^+)v_2 & v_2^T(I - aa^+)v_2
	\end{pmatrix}\begin{pmatrix}
	a_2^Ta_2 & -a_1^Ta_2\\
	a_1^Ta_2 & a_1^Ta_1
	\end{pmatrix}\right)\\
	=& \left(v_1^Tv_1 - v_1^Taa^+v_1\right)a_2^Ta_2 - 2\left(v_1^Tv_2 - v_1^Taa^+v_2\right)a_1^Ta_2 \\
	&\qquad + \left(v_2^Tv_2 - v_2^Taa^+v_2\right)a_1^Ta_1\\
	=& \langle dn_av, dn_av\rangle_{\mathbb R^3}\det(a^Ta),
	\end{align*}
	that is,
	\begin{align*}
	\left\langle v^{\perp}, v^{\perp}\right\rangle_a = \langle dn_av, dn_av\rangle_{\mathbb R^3}\sqrt{\det(g)}.
	\end{align*}
	Therefore the split metric \eqref{eq.splitmetric.matrix} on $M_+(3,2)$ can be rewritten as
	\begin{align*}
	&G^{\mathfrak{a},\mathfrak{b},\mathfrak{c},\mathfrak{d}}_{a}(v, v)\\
	=& \mathfrak{a}\left(\frac14\tr\left( g^{-1}d\pi_avg^{-1}d\pi_av\right) -\frac18\tr^2\left(g^{-1}d\pi_a v\right)\right)\sqrt{\det(g)}\\
	&\quad + \frac {\mathfrak{b}}{8}\tr^2\left(g^{-1}d\pi_a v\right)\sqrt{\det(g)}+ \mathfrak{c}\langle dn_av, dn_av\rangle_{\mathbb R^3}\sqrt{\det(g)}\\
	&\hskip.35in + \mathfrak{d}\langle v_0, v_0\rangle_a.
	\end{align*}
	Now it is easy to see that the first three terms give rise to the formula of the full elastic metric on the space of surfaces and the SRNF metric corresponds to the split metric \eqref{eq.metric_abcd} with constants $(0 ,\frac12, 1, 0)$.
\end{proof}

\begin{proof}[Proof of Theorem~\ref{Proj_diff}]
	We first perform the computation in spherical coordinates $(\theta, \phi)\in [0, 2\pi]\times[0, \pi]$. Denote the usual spherical coordinate orthonormal basis by
	\begin{align*}
	e_1 &= \langle \sin{\phi} \cos{\theta}, \sin{\phi} \sin{\theta}, \cos{\phi}\rangle, \\
	e_2 &= \langle \cos{\phi} \cos{\theta}, \cos{\phi} \sin{\theta}, -\sin{\phi}\rangle, \\
	e_3 &= \langle -\sin{\theta}, \cos{\theta}, 0\rangle.
	\end{align*}
	We have the following formulas for the partial derivatives:
	\begin{alignat}{3}
	\partial_{\phi} e_1 &= e_2, \qquad \partial_{\phi} e_2 = -e_1, \qquad \partial_{\phi} e_3 = 0, \label{thetaderivs}\\
	\partial_{\theta} e_1 &= \sin{\phi} e_3, \qquad \partial_{\theta} e_2 = \cos{\phi} e_3, \\ \partial_{\theta} e_3 &= -\sin{\phi} e_1 - \cos{\phi} e_2.\label{phiderivs}
	\end{alignat}
	We also note that the covariant derivatives are given by
	\begin{align}\label{covderivs}
		\nabla_{e_2}e_2 &= 0,\quad &\nabla_{e_2}\ e_3 &= 0\\
		\nabla_{e_3}e_2 &= \cot{\phi}\, e_3,\quad &\nabla_{e_3}e_3 &= -\cot{\phi}\, e_2.
	\end{align}
	
	Write
	$$ U(\theta, \phi) = u(\theta, \phi) e_2(\theta, \phi) + v(\theta, \phi) e_3(\theta, \phi).$$
	For a real parameter $t$, we consider the following map $W\colon S^2 \to \mathbb{R}^3$ given in coordinates by
	\begin{align}
		W(\theta, \phi) &= e_1(\theta, \phi) + tU(\theta, \phi)\\
		&= e_1(\theta, \phi) + t u(\theta, \phi) e_2(\theta, \phi) + t v(\theta, \phi) e_3(\theta, \phi).
	\end{align}
	Then $\eta = W/\lvert W\rvert$.
	
	Note that in order for $\eta$ to be a diffeomorphism, we require that the Jacobian determinant be nonzero; it is given by
	\begin{align}
		\on{Jac}(\eta) = \frac{1}{\sin\phi}\left\lvert \frac{\partial \eta}{\partial \phi}\times \frac{\partial \eta}{\partial \theta}\right\vert.
	\end{align}
	Observe that
	\begin{align}
		\eta_{\phi} &= \frac{1}{\lvert W\rvert} \left( W_{\phi} - \frac{W\cdot W_{\phi}}{\lvert W\rvert^2}\, W\right) = \frac{1}{\lvert W\rvert} P_{W^{\perp}}(W_{\phi}),\\
		\eta_{\theta} &= \frac{1}{\lvert W\rvert} \left( W_{\theta} - \frac{W\cdot W_{\theta}}{\lvert W\rvert^2}\, W\right)= \frac{1}{\lvert W\rvert} P_{W^{\perp}}(W_{\theta}).
	\end{align}
	Since $\eta_{\phi}$ and $\eta_{\theta}$ are both perpendicular to $W$, we know that $\eta_{\phi}\times\eta_{\theta}$ is parallel to $W$; thus we obtain the formula
	\begin{align}
		\on{Jac}(\eta) &= \frac{1}{\sin\phi\lvert W\rvert^2}\big\lvert P_{W^{\perp}}(W_\phi)\times P_{W^{\perp}}(W_\theta)\big\rvert\\
		&= \frac{1}{\sin\phi\lvert W\rvert^3}\left\lvert W\cdot\big(P_{W^{\perp}}(W_\phi)\times P_{W^{\perp}}(W_\theta)\big)\right\rvert\\
		&= \frac{1}{\sin\phi\lvert W\rvert^3}\left\lvert W\cdot\big(W_\phi\times W_\theta\big)\right\rvert,
	\end{align}
	using the cyclic invariance of the scalar triple product and the fact that $W\times P_{W^{\perp}}(V) = W\times V$ for any vector $V$.
	
	Since $W = e_1+tU$ for the vector field $U = ue_2+ve_3$, it is straightforward to compute using \eqref{thetaderivs}-\eqref{covderivs} that
	\begin{align}
		W_{\phi} &= e_2+tU_{\phi} = e_2 +t\nabla_{e_2}U - tue_1,\\
		\frac{W_{\theta}}{\sin\phi} &= e_3+\frac{t}{\sin\phi}U_{\theta} = e_3 +t\nabla_{e_3}U - tve_1,
	\end{align}
	Let $a = u_{\phi}, b = v_{\phi}, c = \frac{u_{\theta}-v\cos{\phi}}{\sin{\phi}}, d=  \frac{v_{\theta}+u\cos{\phi}}{\sin{\phi}}$. We have by \eqref{covderivs} that
	\begin{align}
	\nabla_{e_2}U = ae_2+be_3,\qquad \nabla_{e_3}U = ce_2+de_3,
	\end{align}
which we abbreviate by 
	\begin{align}
		M :=\nabla U = \begin{pmatrix}
		a & b\\
		c & d
		\end{pmatrix}.
	\end{align}
	Thus the Jacobian is nonzero if and only if the following determinant is nonzero:
	\begin{align}\label{deter}
		D = \begin{vmatrix}
		1 & tu & tv\\
		-tu & 1+ ta & tb\\
		-tv & tc & 1+td
		\end{vmatrix}.
	\end{align}
	Then the determinant \eqref{deter} is given by
	\begin{align}
		D = \det(1+tM) + t^2\langle JU, (1+tM)JU\rangle,
	\end{align}
	where $J = \begin{pmatrix}
	0 & -1\\
	1 & 0
	\end{pmatrix}$.
	
	Let $\overline{M} = \tfrac{1}{2} (M+M^T)$ denote the symmetrization of $M$, and let $\lambda_1\le \lambda_2$ denote the real eigenvalues of $\overline{M}$. Then $\tr{M} = \tr{\overline{M}}$ and $\det{M} = \det{\overline{M}} + \tfrac{1}{4} (b-c)^2$, so that
	\begin{align}
		\det(1+tM) \geq \det(1+t\overline{M}) = (1+\lambda_1t)(1+\lambda_2t).
	\end{align}
	Since $J$ is a rotation, we have
	\begin{align}
		\langle JU, (1+tM)JU\rangle &= \langle JU, (1+t\overline{M})JU\rangle\\
		&\geq (1+\lambda_1t)\lvert JU\rvert^2\\
		&= (1+\lambda_1t)\lvert U\rvert^2.
	\end{align}
	Thus
	\begin{align}
		D\geq (1+\lambda_1t)(1+\lambda_2t+\lvert U\rvert^2t^2).
	\end{align}
	For sufficiently small $t$, we know $(1+\lambda_1t)$ is positive, and since $\lambda_1\leq\lambda_2$, we obtain
	\begin{align*}
		D\geq(1+\lambda_1t)^2
	\end{align*}
	Thus $1+\lambda_1t>0$ is a sufficient condition for positivity of $D$, and this happens as long as $\lvert t\rvert < \frac{1}{\lvert \lambda_1\rvert}$. It is easy to compute that
	$$
	\lambda_1 = \frac{a+d - \sqrt{(a-d)^2 + (b + c)^2}}{2}.
	$$
	In particular $a+d = \tr{(\nabla U)} = \on{div}{U}$, and by the divergence theorem, we know the integral of $a+d$ over $S^2$ is zero, and in particular $a+d$ is either identically zero or changes sign on $S^2$. Since $t$ is nonnegative we therefore are concerned about the most negative that $\lambda_1(x)$ can be:
	$$ 1 + \lambda_1(x) t \ge 1 + t\inf_{x\in S^2} \lambda_1(x) = 1 - t \sup_{p\in S^2}(-\lambda_1(x)) \ge 0,$$
	which is equivalent to
	$$ t < \frac{2}{\sup_{p\in S^2} -(a+d) + \sqrt{(a-d)^2 + (b+c)^2}}.$$
	This is clearly \eqref{tcondition}.
\end{proof}

\end{appendices}

\bibliographystyle{spmpsci}      %

\end{document}